\definecolor{refkey}{rgb}{0,1,1}
\definecolor{labelkey}{rgb}{1,0,0}
\journal{arXiv}
\newtheorem{thm}{Theorem}
\newtheorem{cor}{Corollary}
\newtheorem{prop}[thm]{Proposition}
\numberwithin{equation}{section}
\newcommand{\eq} [1] {\begin{equation}\label{#1}\quad}
\newcommand{\en} {\end{equation}}
\newcommand{\scal}[1]{\langle#1\rangle}
\newcommand{\norm}[1]{\left\Vert#1\right\Vert}
\newcommand{\abs}[1]{\left\vert#1\right\vert}
\newcommand{\C}{\mathbb C}
\newcommand{\R}{\mathbb R}
\newcommand{\diag}{\operatorname{diag}}
\newcommand{\im}{\operatorname{Im}}
\newcommand{\conv}{\operatorname{conv}}
\newcommand{\re}{\operatorname{Re}}
\newcommand{\tr}{\operatorname{trace}}
\newcommand{\cl}{\operatorname{cl}}
\newcommand{\Span}{\operatorname{Span}}
\newcommand{\St}{\operatorname{St}}
\begin{document}

\begin{frontmatter}

\title{On the Stampfli point of some operators and matrices   \tnoteref{support}}


\author{Thanin Quartz and Ilya M. Spitkovsky}
\address{Division of Science and Mathematics, New York  University Abu Dhabi (NYUAD), Saadiyat Island,
P.O. Box 129188 Abu Dhabi, United Arab Emirates}

\tnotetext[support]{The results are partially based on the Capstone project of the first named author [TQ] under the supervision of the second named author [IMS]. The latter was also supported in part by Faculty Research funding from the Division of Science and Mathematics, New York University Abu Dhabi.
\\ \hspace*{.5cm} Email addresses: thanin.quartz@nyu.edu [TQ]; ims2@nyu.edu, ilya@math.wm.edu,  imspitkovsky@gmail.com  [IMS]}

\begin{abstract}
The center of mass of an operator $A$ (denoted $\St(A)$, and called in this paper as the {\em Stampfli point} of $A$) was introduced by Stampfli in his Pacific J. Math (1970) paper as the unique $\lambda\in\C$ delivering the minimum value of $\norm{A-\lambda I}$. We derive some results concerning the location of $\St(A)$ for several classes of operators, including 2-by-2 block operator matrices with scalar diagonal blocks and 3-by-3 matrices with repeated eigenvalues. We also show that for almost normal $A$ its Stampfli point lies in the convex hull of the spectrum, which is not the case in general. Some relations between the property $\St(A)=0$ and Roberts orthogonality of $A$ to the identity operator are established.
\end{abstract}

\end{frontmatter}

\section{Introduction} 

Let $A$ be a bounded linear operator acting on a Hilbert space  $\mathcal H$. In case $\dim\mathcal H=n<\infty$ we will identify $\mathcal H$ with $\C^n$ and $A$ with its $n$-by-$n$ matrix representation in the standard basis $\{e_1,\ldots,e_n\}$  of $\C^n$. We will write $A\in B(\mathcal H)$ when the dimension of $\mathcal H$ is irrelevant and $A\in\C^{n\times n}$ to emphasize that it is finite. 

We will denote the norm, the spectrum, and the numerical range of $A$ as $\norm{A}$, $\sigma(A)$ and $W(A)$, respectively.  Recall that the latter (a.k.a. the {\em field of values}, or the {\em Hausdorff set} of $A$) is defined as  \eq{nr} W(A)=\{ \scal{Ax,x}\colon x\in\mathcal H,\ \norm{x}=1\}. \en
This notion goes back to classical papers by Toeplitz \cite{Toe18} and Hausdorff \cite{Hau}; \cite{GusRa} is a more recent standard reference for the properties of $W(A)$. It is known in particular that $W(A)$ is convex (Toeplitz-Hausdorff theorem), 
its closure $\cl{W(A)}$ contains $\sigma(A)$, and thus the convex hull of it:
\eq{nrs} \cl{W(A)}\supseteq \conv \sigma(A). \en
Operators for which the equality in \eqref{nrs} holds are called {\em  convexoid}; this class includes in particular all normal (and even hyponormal) operators. On the other hand, already for non-normal $A\in\C^{2\times 2}$ the inclusion in \eqref{nrs} is strict: $\conv\sigma(A)$ is then the line segment connecting the eigenvalues $\lambda_1,\lambda_2$ of $A$ while $W(A)$ is an elliptical disk with the foci at $\lambda_1,\lambda_2$ (the Elliptical Range Theorem). 

A variation of \eqref{nr} is the so called {\em maximal} numerical range $W_0(A)$ consisting of the limits of all convergent sequences $\scal{Ax_n,x_n}$ with unit vectors $x_n\in\mathcal H$ such that $ \norm{Ax_n}\to\norm{A}$.
This notion was introduced by J.~Stampfli in \cite{Sta70}, where it was also observed that $W_0(A)$ is a closed convex subset of $\cl{W(A)}$. For $A\in\C^{n\times n}$, $W_0(A)=W(B)$, where $B$ is the compression of $A$ onto the eigenspace of $A^*A$ corresponding to its maximal eigenvalue. 

In the same paper \cite{Sta70}, J.~Stampfli introduced the {\em center of mass} of $A$ as the (unique) value of $\lambda\in\C$ at which the minimum of $\norm{A-\lambda I}$ is attained. Since this term is overused, and to give credit where it is due, we will call this value of $\lambda$ the {\em Stampfli point} of $A$ and denote it $\St(A)$. 
In this notation, according to \cite[Corollary]{Sta70}:
\eq{stw0} \St(A)=\lambda  \text{ if and only if } 0\in W_0(A-\lambda I). \en  
Note that the statements in \eqref{stw0} are {\bf not} equivalent to $\lambda\in W_0(A)$, since the maximal numerical range does not behave nicely under shifts.

Several other observations made in \cite{Sta70} are as follows: 

If $A$ is normal (or even hyponormal), then $\St(A)$ is the center of the smallest circle circumscribing $\sigma(A)$. In general, $\St(A)$ lies in the closure of the numerical range of $A$ but not necessarily in the convex hull of its spectrum. It is also  mentioned in passing that the respective examples exist already when $A$ is nilpotent and $\dim\mathcal H=3$ but no specifics were provided. 

In this paper we further explore properties of the Stampfli point. Section~\ref{s:sdb} provides  an explicit formula for $\St(A)$ for $A$ unitarily similar to 2-by-2 block operator matrices with the diagonal blocks being scalar multiples of the identity operator $I$. This covers in particular quadratic operators, as well as tridiagonal matrices with constant main diagonal. In Section~\ref{s:ano} the property $\St(A)\in\conv\sigma(A)$ is extended from normal to so called almost normal operators. Sections~\ref{s:tri}--\ref{s:tps} are devoted to 3-by-3 matrices. An explicit procedure for computing $\St(A)$ when $\sigma(A)$ is a singleton $\{\lambda\}$ is outlined in Section~\ref{s:trim}, based on some auxiliary results established in Section~\ref{s:tri}. One of these results is the criterion for $\St(A)$ to coincide with $\lambda$. As a generalization of the latter, in Section~\ref{s:tps} we characterize matrices $A\in\C^{3\times 3}$ with a doubleton spectrum and $\St(A)$ coinciding with the multiple eigenvalue. Section~\ref{s:rorth} contains some observations on the relation between the Stampfli point of $A$ and the Roberts orthogonality of $A$ to $I$. Finally, several figures illustrating results of Sections~\ref{s:ano}--\ref{s:trim}   are presented in the Appendix.

\section{Operators with scalar diagonal blocks} \label{s:sdb}

Let us start with the simplest possible case, in which the answer is explicit and can be obtained directly by a straightforward computation. 

\begin{prop} \label{th:2by2} Let $\in\C^{2\times 2}$. Then $\St(A)=\tr A/2$. \end{prop}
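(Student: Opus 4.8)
The plan is to exploit a reflection symmetry of the map $\lambda\mapsto\norm{A-\lambda I}$ after recentering. First I would record the evident shift covariance of the Stampfli point: since $\norm{(A-cI)-\mu I}=\norm{A-(c+\mu)I}$, minimizing over $\mu$ and minimizing over $c+\mu$ are the same problem, so $\St(A-cI)=\St(A)-c$ for every $c\in\C$. Taking $c=\tr A/2$ reduces the claim to showing $\St(B)=0$ for the traceless matrix $B:=A-(\tr A/2)I$; that is, that $f(\mu):=\norm{B-\mu I}$ attains its minimum at $\mu=0$.

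The crux is the identity $f(\mu)=f(-\mu)$ for all $\mu\in\C$. To see it I would use the fact that for a $2$-by-$2$ matrix $M$ the operator norm $\norm{M}=\sigma_1$, the larger singular value, is determined by the two unitarily invariant quantities $\tr(M^*M)=\sigma_1^2+\sigma_2^2$ and $\abs{\det M}=\sigma_1\sigma_2$ via $\sigma_1^2=\tfrac12\big((\tr(M^*M))+\sqrt{(\tr(M^*M))^2-4\abs{\det M}^2}\big)$. It therefore suffices to check that both invariants are unchanged when $\mu$ is replaced by $-\mu$. For the first, $\tr((B\pm\mu I)^*(B\pm\mu I))=\tr(B^*B)\pm\mu\,\tr(B^*)\pm\bar\mu\,\tr B+2\abs{\mu}^2$, and the cross terms vanish because $\tr B=\tr B^*=0$, so this equals $\tr(B^*B)+2\abs{\mu}^2$ independently of the sign. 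For the second, the characteristic polynomial gives $\det(B\pm\mu I)=\mu^2+\det B$ when $\tr B=0$, again independent of the sign. Hence $f(\mu)=f(-\mu)$.

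To finish, I would invoke the uniqueness of the Stampfli point (built into its definition, cf.\ \eqref{stw0}). The symmetry just established shows that if $\mu_0$ minimizes $f$ then so does $-\mu_0$; uniqueness forces $\mu_0=-\mu_0$, i.e.\ $\mu_0=0$. Therefore $\St(B)=0$ and $\St(A)=\tr A/2$.

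I expect no serious obstacle here: the whole argument rests on the elementary two-invariant description of the norm of a $2$-by-$2$ matrix together with the reflection symmetry of a traceless matrix, and the only external input is Stampfli's uniqueness statement. A purely computational alternative — writing $\lambda=x+iy$, expanding $\norm{A-\lambda I}^2$ as an explicit function of $(x,y)$ and locating its critical point (unique, since the function is convex and coercive) — would also work, but the symmetry route avoids grinding through the square root in $\sigma_1^2$.
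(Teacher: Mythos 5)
Your proof is correct, but it follows a genuinely different route from the paper's. The paper triangularizes $A$ by a unitary similarity, computes $(A-\lambda I)^*(A-\lambda I)$ explicitly, writes out $2\norm{A-\lambda I}^2$ in closed form (its equation \eqref{2max}), and then observes that every term in that expression --- $\abs{\lambda_1-\lambda}^2+\abs{\lambda_2-\lambda}^2$, the square $(\abs{\lambda_1-\lambda}^2-\abs{\lambda_2-\lambda}^2)^2$, and the $c^2$-weighted sum --- is simultaneously minimized at the midpoint of $[\lambda_1,\lambda_2]$, a step it dispatches as ``elementary planar geometry.'' You instead recenter to a traceless $B$, note that $\norm{M}$ for $M\in\C^{2\times 2}$ is a function of the unitary invariants $\trace(M^*M)$ and $\abs{\det M}$, verify both are even in $\mu$ when $\tr B=0$, and conclude by uniqueness of the minimizer. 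Your route buys a conceptual explanation (a reflection symmetry of $f(\mu)=\norm{B-\mu I}$) and avoids grinding through the explicit square-root formula, at the cost of importing Stampfli's uniqueness theorem from \cite{Sta70} as external input --- a legitimate citation, though your parenthetical pointer to \eqref{stw0} is slightly off, since uniqueness is a separate result of that paper, not that equivalence. Two minor remarks: you could drop the uniqueness appeal entirely, since $\mu\mapsto\norm{B-\mu I}$ is convex, so $f(\mu)=f(-\mu)$ gives $f(0)\leq\tfrac12\bigl(f(\mu)+f(-\mu)\bigr)=f(\mu)$ directly; and note your symmetry argument is specific to $n=2$ (for $n\geq 3$ the singular values are not determined by those two invariants, and indeed $\St(A)=\tr A/n$ fails in general, as the paper's Section~\ref{s:tri} shows), whereas the paper's computation makes visible exactly how the off-diagonal entry $c$ enters --- a formula it reuses later in the proof of Theorem~\ref{th:rq}.
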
 
\begin{proof} Using a unitary similarity, put $A$ in an upper-triangular form:
\[ A =\begin{bmatrix} \lambda_1 & c \\ 0 & \lambda_2 \end{bmatrix}, \]
where $\{\lambda_1,\lambda_2\}=\sigma(A)$, $c\geq 0$. Then, for any $\lambda\in\C$:
\[ (A-\lambda I)^* (A-\lambda I)=\begin{bmatrix} \abs{\lambda_1-\lambda}^2 & c(\overline{\lambda_1}-\overline{\lambda}) 
\\ c(\lambda_1-\lambda) & c^2+\abs{\lambda_2-\lambda}^2 \end{bmatrix},\]
and so \begin{multline}\label{2max} 2\norm{A-\lambda I}^2= \abs{\lambda_1-\lambda}^2+\abs{\lambda_2-\lambda}^2+c^2\\ +
\sqrt{(\abs{\lambda_1-\lambda}^2-\abs{\lambda_2-\lambda}^2)^2+c^4+
	2c^2(\abs{\lambda_1-\lambda}^2+\abs{\lambda_2-\lambda}^2)}.\end{multline}
Elementary planar geometry shows that the minimum with respect to $\lambda$ in the right hand side of \eqref{2max} 
is attained when $\lambda$ is a midpoint of $[\lambda_1,\lambda_2]$.
\end{proof} 
Note that Proposition~\ref{th:2by2} can also be proved based on \eqref{stw0} and the explicit formula from \cite{HamS} for $W_0(A)$ in case of $2$-by-$2$ matrices. 


\begin{thm}\label{th:brs}Let $A\in \C^{n\times n}$ be unitarily similar to a matrix of the form 
\eq{brs} \begin{bmatrix} a_1 I_{n_1} & X \\ Y^* & a_2 I_{n_2} \end{bmatrix}, \en 
with $XY^*\in \C^{n_1\times n_1}$, $Y^*X\in \C^{n_2\times n_2}$ being normal. Then $\St(A)=(a_1+a_2)/2$. \end{thm}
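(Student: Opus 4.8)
The plan is to translate the problem to the origin and then uncover a hidden normality. Put $\mu=(a_1+a_2)/2$ and $d=(a_1-a_2)/2$, and replace $A$ by $B:=A-\mu I$, which is unitarily similar to
\[ B=\begin{bmatrix} dI_{n_1} & X \\ Y^* & -dI_{n_2}\end{bmatrix}. \]
By the Stampfli criterion \eqref{stw0}, the assertion $\St(A)=\mu$ is equivalent to $0\in W_0(B)$, so the entire task is to place $0$ in the maximal numerical range of $B$.

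The first key observation is that $B$ squares to a normal operator. Writing $B=dZ+H$ with $Z=\diag(I_{n_1},-I_{n_2})$ and $H=\begin{bmatrix} 0 & X\\ Y^* & 0\end{bmatrix}$, one has $ZH=-HZ$, hence
\[ B^2=d^2 I+H^2, \qquad H^2=\diag(XY^*,\,Y^*X). \]
The hypotheses state exactly that both diagonal blocks of $H^2$ are normal, so $H^2$ is normal and $\C^{n}$ splits as an orthogonal direct sum of its eigenspaces. Each such eigenspace is graded (because $H^2$ is block diagonal) and invariant under $H$ (because $H$ commutes with $H^2$), hence invariant under $B$.

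On each eigenspace I would reduce $B$ to a simple normal form. For a nonzero eigenvalue $\gamma$ of $H^2$ the two graded components have equal dimension and $X,Y$ restrict to invertible maps between them with $XY^*=\gamma I=Y^*X$; a singular value decomposition of the restriction of $X$ then diagonalizes $Y^*$ simultaneously and splits the restriction of $B$ into trace-zero $2$-by-$2$ blocks $\begin{bmatrix} d & \sigma \\ \gamma/\sigma & -d\end{bmatrix}$. On the kernel of $H^2$ the restriction of $H$ is a nilpotent off-diagonal operator satisfying $XY^*=0=Y^*X$ there; decomposing the resulting pair of maps into single-arrow pieces yields trace-zero blocks $\begin{bmatrix} d & s\\ 0 & -d\end{bmatrix}$, $\begin{bmatrix} d & 0\\ t & -d\end{bmatrix}$ and scalar blocks $[\,d\,]$, $[\,-d\,]$. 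After pairing opposite-sign scalars into $\diag(d,-d)$, the upshot is that $B$ is unitarily equivalent to $\bigoplus_k C_k$ with every $C_k$ a trace-zero $2$-by-$2$ matrix, together with at most one purely scalar summand $dI_p$ or $-dI_q$. I expect this normal-form reduction, and in particular the orthogonal splitting of the nilpotent part on $\ker H^2$ (where $X$ and $Y$ need not pair up and only the relations $XY^*=0=Y^*X$ are available), to be the main obstacle.

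It then remains to read $0\in W_0(B)$ off the direct sum. Since $W_0$ of a direct sum is the convex hull of the $W_0$ of the summands of maximal norm, and each trace-zero $C_k$ satisfies $0\in W_0(C_k)$ by Proposition~\ref{th:2by2} and \eqref{stw0}, the case $\abs{d}<\norm{B}$ is immediate, as the scalar summand is then not norm-attaining. In the remaining case $\abs{d}=\norm{B}$ one uses that $\norm{C_k}\ge\abs{d}$ always, with equality forcing $C_k=\diag(d,-d)$, whose numerical range is the segment $[-d,d]\ni 0$; and if no $2$-by-$2$ summand occurs then $B=dZ$ carries both eigenvalues $\pm d$ because $n_1,n_2\ge1$. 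In every case $0\in W_0(B)$, which gives $\St(A)=\mu$.
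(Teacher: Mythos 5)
Your proof is correct, and its skeleton --- reduce $A$ to a direct sum of $2$-by-$2$ blocks with diagonal entries $a_1,a_2$ plus scalar blocks, then invoke Proposition~\ref{th:2by2} --- is exactly the paper's. The difference lies in what you prove versus what you import: the paper simply cites the proof of Theorem~2.1 in \cite{BS04} for the unitary reduction of \eqref{brs} to $\min\{n_1,n_2\}$ two-dimensional blocks with diagonal $(a_1,a_2)$ together with $\abs{n_1-n_2}$ scalar blocks, whereas you re-derive that decomposition from scratch via the grading trick $B=dZ+H$, $ZH=-HZ$, $B^2=d^2I+H^2$ with $H^2=\diag(XY^*,Y^*X)$ normal, followed by an SVD on each $H^2$-eigenspace; your treatment of $\ker H^2$ does go through as sketched, since $X_0Y_0^*=0=Y_0^*X_0$ gives $\operatorname{ran}Y_0^*\subseteq\ker X_0$ and $\operatorname{ran}X_0\subseteq\ker Y_0^*$, so the space splits orthogonally into an $\begin{bmatrix} d & * \\ 0 & -d\end{bmatrix}$ part, a $\begin{bmatrix} d & 0 \\ * & -d\end{bmatrix}$ part, and scalars --- the obstacle you flagged is real work but surmountable. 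Your endgame also differs: you pass through $0\in W_0(B)$ using the fact that $W_0$ of a direct sum is the convex hull of the $W_0$'s of the norm-attaining summands (true in finite dimensions, via the compression onto the top eigenspace of $B^*B$, but it deserves a line of justification and forces your case analysis at $\abs{d}=\norm{B}$). The paper's finish is more elementary and bypasses $W_0$ entirely: since each $2$-by-$2$ block $A_k$ satisfies $\norm{A_k-\lambda I}\geq\abs{a_j-\lambda}$ for $j=1,2$, the scalar blocks never affect $\norm{A-\lambda I}=\max_k\norm{A_k-\lambda I}$, and as every $\norm{A_k-\lambda I}$ is minimized at the common point $(a_1+a_2)/2$, so is their maximum. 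You could graft this two-line finish onto your decomposition (at least one $2$-by-$2$ or paired $\diag(d,-d)$ block always exists because $n_1,n_2\geq 1$) and drop the $W_0$ machinery altogether; what your route buys instead is self-containedness, since the decomposition lemma is proved rather than cited.
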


Proposition~\ref{th:2by2} is of course a particular case of Theorem~\ref{th:brs} (corresponding to $n_1=n_2=1$), but at the same time also the main ingredient of its proof. 

\begin{proof} As was shown in \cite{BS04} (see the proof of Theorem~2.1 there), matrices under consideration are unitarily similar to direct sums of $\min\{n_1,n_2\}$ two-dimensional blocks $A_k$, all having $a_1,a_2$ as its diagonal entries, with $\abs{n_1-n_2}$ one-dimensional blocks, equal $a_1$ or $a_2$. According to Proposition~\ref{th:2by2}, $\St(A_k)=(a_1+a_2)/2$ does not depend on $k=1,\ldots,\min\{n_1,n_2\}$. 
Since for any $\lambda\in\C$, $\norm{A_k-\lambda I}\geq\abs{a_j-\lambda}$ ($j=1,2$), the value of $\St(A)$ for the whole matrix $A$ coincides with that of its blocks $A_k$. \end{proof} 

The normality of $XY^*, Y^*X$ holds in a trivial way if $Y=0$, i.e., $A$ is unitarily similar to 
\eq{mquad} \begin{bmatrix} \lambda_1 I & Z \\ 0 &  \lambda_2 I \end{bmatrix}, \en 
with $\{\lambda_1,\lambda_2\}=\sigma(A)$. This happens if and only if $A$ satisfies the equation \eq{quad} A^2+pA+qI=0 \en  with $p=-(\lambda_1+\lambda_2), \ q=\lambda_1\lambda_2$. 

Here is an infinite-dimensional analogue of this situation.

\begin{thm} Let $A\in B(\mathcal H)$ be a quadratic operator, i.e., \eqref{quad} holds for some $p,q\in\C$. Then $\St(A)=-p/2$.\label{th:quad} \end{thm}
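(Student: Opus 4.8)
The plan is to reduce $A$ to the block shape \eqref{mquad} and then read off $\St(A)$ from Proposition~\ref{th:2by2}. First I note that the statement presupposes $A$ to be non-scalar (equivalently, that $t^2+pt+q$ is the minimal polynomial of $A$), since otherwise $p$ is not determined by $A$; granting this, the roots $\lambda_1,\lambda_2$ of $t^2+pt+q$ are exactly the points of $\sigma(A)$ and the claimed value $-p/2$ is their midpoint. If $\lambda_1\neq\lambda_2$, then $P:=(A-\lambda_2 I)/(\lambda_1-\lambda_2)$ is idempotent and $A=\lambda_1 P+\lambda_2(I-P)$; writing $\mathcal H=M\oplus M^\perp$ with $M=\operatorname{ran}P$ puts $P=\begin{bmatrix} I & R\\ 0 & 0\end{bmatrix}$ and hence $A=\begin{bmatrix}\lambda_1 I & (\lambda_1-\lambda_2)R\\ 0 & \lambda_2 I\end{bmatrix}$. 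If $\lambda_1=\lambda_2=:\nu$, then $A-\nu I$ is square-zero, so writing $\mathcal H$ as $\ker(A-\nu I)$ together with its orthogonal complement again yields \eqref{mquad}, now with equal diagonal blocks. In either case $A$ is unitarily similar to $\begin{bmatrix}\lambda_1 I & Z\\ 0 & \lambda_2 I\end{bmatrix}$, which is the infinite-dimensional form of the remark following Theorem~\ref{th:brs}.

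Next I would diagonalize the off-diagonal block $Z$. Through the polar decomposition $Z=U|Z|$ and the spectral theorem for $|Z|=(Z^*Z)^{1/2}$, the pair $M,M^\perp$ splits into a direct integral over the spectrum of $|Z|$ in which $A$ becomes a direct integral of $2\times 2$ upper-triangular blocks $\begin{bmatrix}\lambda_1 & s\\ 0 & \lambda_2\end{bmatrix}$ for $s>0$, together with scalar parts $\lambda_1 I$ on $\ker Z^*\subseteq M$ and $\lambda_2 I$ on $\ker Z\subseteq M^\perp$. This is the exact analogue, in the operator setting, of the reduction to two-dimensional blocks attributed to \cite{BS04} in the proof of Theorem~\ref{th:brs}.

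To finish, I would show that $\lambda\mapsto\norm{A-\lambda I}$ attains its minimum at $-p/2$. The crucial point supplied by Proposition~\ref{th:2by2} is that the Stampfli point of $\begin{bmatrix}\lambda_1 & s\\ 0 & \lambda_2\end{bmatrix}$ equals $(\lambda_1+\lambda_2)/2=-p/2$ \emph{for every} $s$; thus each $2\times 2$ fiber of $\norm{A-\lambda I}$, viewed as a function of $\lambda$, is minimized at the common point $-p/2$. Since each fiber is triangular, $\norm{\begin{bmatrix}\lambda_1-\lambda & s\\ 0 & \lambda_2-\lambda\end{bmatrix}}\ge\max(\abs{\lambda_1-\lambda},\abs{\lambda_2-\lambda})$, so when $Z\neq0$ the scalar parts are dominated by any $2\times 2$ fiber and may be dropped from the essential supremum $\norm{A-\lambda I}=\operatorname{ess\,sup}\norm{(\text{fiber})-\lambda I}$; an essential supremum of functions each minimized at $-p/2$ is itself minimized there, giving $\St(A)=-p/2$. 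The remaining case $Z=0$ leaves a normal operator with $\sigma(A)=\{\lambda_1,\lambda_2\}$, whose Stampfli point is once more the midpoint. I expect the only genuine difficulty to be functional-analytic rather than computational: when $Z$ fails to be compact there is no discrete singular-value decomposition, so the honest argument must phrase the reduction as a measurable direct-integral decomposition and replace the elementary supremum of Theorem~\ref{th:brs} by its essential-supremum version.
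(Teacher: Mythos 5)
Your proof is correct, and its skeleton --- reduce $A$ to the block form \eqref{mquad}, then invoke Proposition~\ref{th:2by2} --- is the same as the paper's; the differences lie in the two intermediate steps. First, where the paper simply cites \cite{TsoWu} for the existence of the decomposition \eqref{mquad}, you reprove it via the idempotent $(A-\lambda_2 I)/(\lambda_1-\lambda_2)$ (resp.\ the square-zero operator $A-\nu I$), making the reduction self-contained; your opening caveat that $A$ must be non-scalar for $p$ to be determined is also a legitimate point that the statement leaves implicit. Second, and more substantially, where you pass through a polar decomposition, a direct-integral diagonalization of $\abs{Z}$, and an essential-supremum argument, the paper disposes of the infinite-dimensionality in one line: for $A$ of the form \eqref{mquad} one has $\norm{A-\lambda I}=\norm{A_0-\lambda I}$ for every $\lambda\in\C$, where $A_0=\begin{bmatrix}\lambda_1 & \norm{Z}\\ 0 & \lambda_2\end{bmatrix}$, because the norm of a block operator with scalar diagonal blocks depends on the off-diagonal block only through its norm. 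Equivalently, in your language, the fiber norms $f_s(\lambda)$ are nondecreasing in $s\geq 0$, so your essential supremum over $s\in\sigma(\abs{Z})$ collapses to the single fiber $s=\norm{Z}$, and no measurability or multiplicity theory enters; this also subsumes your separate handling of the scalar parts and of the case $Z=0$, since $f_0(\lambda)=\max\{\abs{\lambda_1-\lambda},\abs{\lambda_2-\lambda}\}$ is likewise minimized at the midpoint. Your route buys a self-contained structural reduction in the spirit of the \cite{BS04} argument behind Theorem~\ref{th:brs}, at the cost of machinery the problem does not require; the paper's buys brevity at the cost of uncommented appeals to \cite{TsoWu} and to the norm identity. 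For the record, your two soft steps do hold --- a supremum (or essential supremum) of functions each minimized at a common point is minimized there, and each triangular fiber dominates both $\abs{\lambda_1-\lambda}$ and $\abs{\lambda_2-\lambda}$ --- so no gap results.
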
 

\begin{proof}As was observed in \cite{TsoWu}, for an operator $A$ satisfying \eqref{quad} there exists a partition $\mathcal H=\mathcal H_1\oplus\mathcal H_2$ with respect to which $A$ takes the form \eqref{mquad}. 
But then for any $\lambda\in\C$ 
\[ \norm{A-\lambda I}=\norm{A_0-\lambda I}, \text{ where } A_0=\begin{bmatrix} \lambda_1 & \norm{Z} \\ 0 &  \lambda_2 \end{bmatrix}\in \C^{2\times 2}, \] and so $\St(A)=\St(A_0)$. It remains to invoke Proposition~\ref{th:2by2}.
\end{proof} 

Note that in the setting of Theorem~\ref{th:quad} $\sigma(A)=\{\lambda_1,\lambda_2\}$, and according to \cite[Theorem 2.1]{TsoWu} $W(A)$ is an elliptical disk with the foci $\lambda_1,\lambda_2$ (possible degenerating into the line segment $[\lambda_1,\lambda_2]$). So, for quadratic $A$ the position of $\St(A)$ is defined by $\sigma(A)$ uniquely, and is indeed at the center of both $\sigma(A)$ and $W(A)$. This justifies to some extent the ``center of mass'' term for $\St(A)$ coined by Stampfli. 

Finally, if in \eqref{brs} $a_1=a_2$, then no conditions on $X,Y$ are needed and, moreover, the formula for $\St(A)$ holds in the inifinite dimensional setting. 

\begin{thm} \label{th:xy} Let $A\in B(\mathcal H)$. If there exists a subspace $\mathcal L$ of $\mathcal H$ such that compressions of $A$ onto $\mathcal L$ and its orthogonal complement $\mathcal L^\perp$ are both multiples of the identity by the same scalar $a$, then $\St(A)=a$. \end{thm}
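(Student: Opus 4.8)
The plan is to bypass the direct minimization of $\norm{A-\lambda I}$ and instead invoke the criterion \eqref{stw0}: it suffices to show that $0\in W_0(A-aI)$, and this is equivalent to $\St(A)=a$. Set $B=A-aI$ and work throughout with the orthogonal decomposition $\mathcal H=\mathcal L\oplus\mathcal L^\perp$. Since the two diagonal compressions of $A$ both equal $aI$, subtracting $aI$ annihilates them, so $B$ is purely off-diagonal: in block form $B=\begin{bmatrix}0 & X\\ Y^* & 0\end{bmatrix}$ with $X\colon\mathcal L^\perp\to\mathcal L$ and $Y^*\colon\mathcal L\to\mathcal L^\perp$. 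The decisive structural observation is that for any vector $w$ lying entirely in $\mathcal L$ or entirely in $\mathcal L^\perp$ one has $\scal{Bw,w}=0$, because $Bw$ then lands in the complementary summand and is orthogonal to $w$.

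Next I would locate the norm of $B$ by computing $B^*B=YY^*\oplus X^*X$, which is block-diagonal with respect to the same decomposition. Consequently $\norm{B}^2=\max\{\norm{X}^2,\norm{Y}^2\}$, and the supremum of the spectrum of the positive operator $B^*B$ is approached by unit vectors living in a single summand: in $\mathcal L^\perp$ when $\norm{X}\geq\norm{Y}$, and in $\mathcal L$ otherwise. Assuming without loss of generality the former (the two cases are symmetric under swapping $\mathcal L$ and $\mathcal L^\perp$), there is a sequence of unit vectors $v_n\in\mathcal L^\perp$ with $\norm{Xv_n}^2=\scal{X^*Xv_n,v_n}\to\norm{X}^2=\norm{B}^2$, hence $\norm{Bv_n}\to\norm{B}$.

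Finally I would combine the two observations on the same sequence: each $v_n$ satisfies $\scal{Bv_n,v_n}=0$ by the off-diagonal structure, while $\norm{Bv_n}\to\norm{B}$ by the choice of $v_n$. By the defining property of the maximal numerical range this exhibits $0$ as a limit of inner products over an approximate norm-maximizing sequence, so $0\in W_0(B)=W_0(A-aI)$, and \eqref{stw0} then gives $\St(A)=a$.

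The main point requiring care is the infinite-dimensional handling of $W_0$: because $\norm{B}$ need not be attained, I must argue through the sequence definition of the maximal numerical range rather than through a compression onto an actual maximal eigenspace, and verify that the approximate norm-maximizers can genuinely be chosen inside one of the two summands. This is precisely what the block-diagonal form of $B^*B$ guarantees, and it is the feature that allows the argument to dispense with the normality hypotheses and the finite-dimensional reductions to $2$-by-$2$ blocks underlying Theorems~\ref{th:brs} and~\ref{th:quad}.
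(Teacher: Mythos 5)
Your proof is correct, and it takes a genuinely different route from the paper's. You verify Stampfli's criterion \eqref{stw0} directly: after the shift, $B=A-aI$ is purely off-diagonal with respect to $\mathcal H=\mathcal L\oplus\mathcal L^\perp$, so $B^*B=YY^*\oplus X^*X$ is block diagonal; hence an approximate norm-maximizing sequence of unit vectors can be chosen inside a single summand, and on such vectors $\scal{Bv,v}=0$ automatically, giving $0\in W_0(A-aI)$ and thus $\St(A)=a$. Your insistence on the sequence definition of $W_0$ is exactly the right precaution in the infinite-dimensional setting, where $\norm{B}$ need not be attained, and the block-diagonality of $B^*B$ does justify confining the approximate maximizers to one summand. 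The paper, by contrast, never invokes $W_0$: after reducing to $a=0$ it computes $(A-\lambda I)^*(A-\lambda I)$ for \emph{every} $\lambda$, obtaining $\abs{\lambda}^2 I$ plus a self-adjoint block operator whose positive semi-definite diagonal blocks $YY^*$ and $X^*X$ do not depend on $\lambda$, and then closes with the monotonicity lemma that zeroing the off-diagonal blocks of such a matrix can only decrease the norm. That direct comparison yields $\norm{A-\lambda I}^2\geq\abs{\lambda}^2+\norm{A}^2$, which exhibits $\lambda=0$ as the minimizer (indeed strictly, without appealing to Stampfli's uniqueness theorem), whereas your argument needs the computation of $B^*B$ only at the single point $\lambda=a$ and outsources both existence and uniqueness of the minimizer to \eqref{stw0}. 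Both proofs pivot on the same structural fact --- the shift by $a$ annihilates the diagonal blocks --- so the trade-off is the closing tool: a quantitative block-norm inequality for the paper versus the maximal-numerical-range criterion, used as a black box, for you.
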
 
\begin{proof}It suffices to consider the case $a=0$. The operator $A$ then can be represented as $\begin{bmatrix} 0 & X \\ Y^* & 0 \end{bmatrix}$ with respect to the decomposition $\mathcal H =\mathcal L\oplus\mathcal L^\perp$. 

We need to show that the norm of $\begin{bmatrix} \lambda I & X \\ Y^* & \lambda I \end{bmatrix}$ attains its minimum with respect to $\lambda$ at $\lambda=0$. Equivalently, the rightmost point of the spectrum (or, which is the same in this case, the numerical range) of 
\[ \begin{bmatrix} \overline{\lambda} I & Y \\ X^* & \overline{\lambda I} \end{bmatrix}
\begin{bmatrix} \lambda I & X \\ Y^* & \lambda I \end{bmatrix}=
\abs{\lambda}^2I+\begin{bmatrix} YY^* & \overline{\lambda}X+\lambda  Y \\ \lambda X^*+\overline{\lambda}Y^* & X^*X \end{bmatrix} \] should be the smallest when $\lambda=0$. 

But this is indeed the case, simply because the norm of any block operator matrix $\begin{bmatrix} H_1 & Z \\ Z^* & H_2\end{bmatrix}$ with fixed positive semi-definite diagonal blocks is minimal when its off-diagonal blocks are equal to zero. \end{proof} 
\begin{cor}\label{co:tri} Let $A\in B(\mathcal H)$ be such that the entries $a_{ij}$ of its matrix in some orthonormal basis $\{f_j\}$ have the property: $a_{ij}=0$ if $i-j$ is even and different from zero; $a_{jj}:=a$ is independent of $j$. Then $\St(A)=a$. \end{cor}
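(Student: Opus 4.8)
The plan is to reduce this to Theorem~\ref{th:xy} by producing the required orthogonal decomposition directly from the parity of the basis indices. I would partition the index set of $\{f_j\}$ into its odd and even parts and let $\mathcal L$ be the closed linear span of $\{f_j\colon j \text{ odd}\}$; then $\mathcal L^\perp$ is the closed linear span of $\{f_j\colon j \text{ even}\}$, since $\{f_j\}$ is an orthonormal basis of $\mathcal H$.

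Next I would compute the two compressions. The compression of $A$ onto $\mathcal L$ has, in the basis $\{f_j\colon j \text{ odd}\}$, the entries $\scal{Af_j,f_i}=a_{ij}$ with both $i$ and $j$ odd; the compression onto $\mathcal L^\perp$ is described the same way with both indices even. In either case $i-j$ is even. By hypothesis every such entry with $i\neq j$ vanishes, while the entries with $i=j$ all equal $a$. Hence both compressions are exactly $a$ times the identity on the respective subspace.

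With this decomposition in hand, Theorem~\ref{th:xy} applies verbatim and yields $\St(A)=a$. There is essentially no analytic obstacle here: the only point requiring a moment's care is the infinite-dimensional bookkeeping, namely that splitting an orthonormal basis by parity really does give a pair of mutually orthogonal closed subspaces whose sum is $\mathcal H$, so that the two compressions are genuine diagonal blocks of $A$ with respect to $\mathcal L\oplus\mathcal L^\perp$. Once that is noted, the corollary is immediate.
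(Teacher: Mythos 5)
Your proposal is correct and matches the paper's proof exactly: the paper likewise reduces the corollary to Theorem~\ref{th:xy} by taking $\mathcal L$ to be the span of the basis vectors of one parity (the paper uses the even-indexed $f_j$, you the odd-indexed ones, which is immaterial) and noting that both compressions are $aI$ since all retained off-diagonal entries have $i-j$ even. Your explicit verification of the compression entries and of the orthogonal decomposition is a slightly more detailed writing-out of what the paper states in one line, not a different argument.
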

Indeed, such $A$ meets conditions of Theorem~\ref{th:xy} with $\mathcal L =\Span\{f_j\colon j \text{ even}\}$. 

This corollary covers in particular tridiagonal matrices $A$ with constant main diagonal.  

\section{Almost normal operators} \label{s:ano}

We adopt the definition of almost normality which for $A\in\C^{n\times n}$ was introduced in \cite{Ikra11} as having at least $n-1$ pairwise orthogonal eigenvectors. We will therefore say that  $A\in B(\mathcal H)$ is {\em almost normal} if it has an invariant subspace $\mathcal L$ of codimension one such that $A|\mathcal L$ is normal. 

\begin{thm}\label{th:an} Let  $A\in B(\mathcal H)$ be an almost normal operator. Then \eq{stan} \St(A)\in\conv\sigma(A).\en \end{thm}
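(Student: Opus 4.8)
The plan is to reduce $A$ to a $2$-by-$2$ block triangular form and then read off the location of $\St(A)$ from the characterization \eqref{stw0}. First I would fix an invariant subspace $\mathcal L$ of codimension one with $N:=A|\mathcal L$ normal and write, with respect to $\mathcal H=\mathcal L\oplus\mathcal L^\perp$,
\[ A=\begin{bmatrix} N & b \\ 0 & c \end{bmatrix},\qquad b\in\mathcal L,\ c\in\C, \]
the lower-left block vanishing because $\mathcal L$ is $A$-invariant and the lower-right block being a scalar because $\dim\mathcal L^\perp=1$. A short argument identifies the spectrum as $\sigma(A)=\sigma(N)\cup\{c\}$: the inclusion $\sigma(N)\subseteq\sigma(A)$ follows from $(A-\mu)u=(N-\mu)u$ for $u\in\mathcal L$ together with $\sigma(N)$ being the approximate point spectrum of the normal $N$; that $c\in\sigma(A)$ follows because $A-c$ maps $\mathcal H$ into $\mathcal L$ and so is not surjective; and $\sigma(A)\subseteq\sigma(N)\cup\{c\}$ is the standard inclusion for triangular operators. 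Thus $\conv\sigma(A)=\conv(\sigma(N)\cup\{c\})$, and it remains to place $\St(A)$ in this set.

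I would carry out the main computation first in the case $\dim\mathcal H=n<\infty$, where $N$ is unitarily diagonal, $N=\diag(\lambda_1,\ldots,\lambda_{n-1})$, and the recipe ``$W_0=W$ of the compression to the top eigenspace of $A^*A$'' quoted in the Introduction is available. Put $\lambda_0:=\St(A)$, $B:=A-\lambda_0 I$, $M:=\norm{B}$, $d_i:=\lambda_i-\lambda_0$, and $c':=c-\lambda_0$; we may assume $M>0$, since otherwise $A=\lambda_0 I$ and there is nothing to prove. By \eqref{stw0} we have $0\in W_0(B)$, so there is a unit vector $x=u\oplus s$ in the top eigenspace of $B^*B$ with $\scal{Bx,x}=0$. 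Writing $B^*Bx=M^2x$ componentwise gives $u_i=\bar d_i b_i s/(M^2-\abs{d_i}^2)$ whenever $\abs{d_i}<M$, while $\abs{d_i}=M$ forces $b_i s=0$. Substituting into $\scal{Bx,x}=0$ and simplifying yields
\[ \sum_{\abs{d_i}=M}\abs{u_i}^2 d_i+\abs{s}^2\Big(c'+M^2\!\!\sum_{\abs{d_i}<M}\frac{\abs{b_i}^2 d_i}{(M^2-\abs{d_i}^2)^2}\Big)=0. \]
Recalling $d_i=\lambda_i-\lambda_0$ and $c'=c-\lambda_0$ and collecting terms, this exhibits $\lambda_0$ as a convex combination of $\{\lambda_1,\ldots,\lambda_{n-1},c\}$ with nonnegative coefficients whose total mass is positive (it equals $\norm{u}^2$ when $s=0$, and includes the positive weight $\abs{s}^2$ on $c$ when $s\neq0$). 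Hence $\lambda_0\in\conv\sigma(A)$.

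For general $A\in B(\mathcal H)$ the same mechanism should run through the maximal numerical range directly. By \eqref{stw0} there are unit vectors $x_n=u_n\oplus s_n$ with $\norm{Bx_n}\to M$ and $\scal{Bx_n,x_n}\to0$. Assuming, for contradiction, that $0\notin\conv\sigma(B)=\conv(\sigma(N-\lambda_0 I)\cup\{c'\})$, a rotation lets me arrange that every point of $\sigma(N-\lambda_0 I)$ and $c'$ has real part $\le-\delta<0$. Since $N-\lambda_0 I$ is normal, hence convexoid, $\Re\scal{(N-\lambda_0)u_n,u_n}\le-\delta\norm{u_n}^2$, and likewise $\Re(c')\abs{s_n}^2\le-\delta\abs{s_n}^2$, so that $\Re\scal{Bx_n,x_n}\le-\delta+\Re\big(s_n\scal{b,u_n}\big)$. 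The componentwise relation of the previous paragraph shows that at an exact maximizer the cross term $\Re\big(s\scal{b,u}\big)$ is $\le 0$ under this normalization (each summand carries the factor $\Re d_i/(M^2-\abs{d_i}^2)\le 0$), which would force $\Re\scal{Bx_n,x_n}\le-\delta$ and contradict $\scal{Bx_n,x_n}\to0$.

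The main obstacle is precisely this last point in infinite dimensions: the norm of $B$ need not be attained, so there is no genuine top eigenvector to supply the sign of the cross term, and a priori $\Re\big(s_n\scal{b,u_n}\big)$ could be positive. I expect to resolve this by replacing the eigenvector relation with its asymptotic analogue, using the spectral measure $E$ of the normal part to express $u_n$ against $dE$ and showing that near-maximizers concentrate so that $s_n\scal{b,u_n}$ inherits the favorable sign in the limit; one may first restrict to the separable reducing subspace of $N$ generated by $b$, where the problem becomes a rank-one coupling of a multiplication operator. This passage from exact to approximate maximizers is the only delicate step, the algebraic identity of the finite-dimensional case being what drives the result.
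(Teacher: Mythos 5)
Your finite-dimensional argument is correct and complete: the componentwise eigenvector relation for $B^*B$, the identity
\[ \sum_{\abs{d_i}=M}\abs{u_i}^2 d_i+\abs{s}^2\Bigl(c'+M^2\sum_{\abs{d_i}<M}\frac{\abs{b_i}^2 d_i}{(M^2-\abs{d_i}^2)^2}\Bigr)=0, \]
and the positivity of the total mass all check out (I verified the algebra, including the degenerate cases $s=0$ and $\abs{d_i}=M$). It is in fact a nice direct variant of the paper's finite-dimensional proof: the paper instead fixes the arguments of the entries of a maximizing vector via \eqref{argf} and reads off $\scal{A\xi,\xi}$ as a nonnegative combination of spectral points, arguing by contradiction; you solve the eigenvector equation explicitly and exhibit $\St(A)$ outright as a convex combination of $\lambda_1,\ldots,\lambda_{n-1},c$, which is slightly more informative. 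Both versions rest on \eqref{stw0} together with the structure of the top eigenspace of $B^*B$.

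The genuine gap is exactly where you flag it: the theorem is stated for arbitrary $A\in B(\mathcal H)$, and your infinite-dimensional passage is a plan, not a proof. The step that fails is the transfer of the sign of the cross term from exact to approximate maximizers. When $\norm{B}$ is not attained, the relation $u_i=\bar d_i b_i s/(M^2-\abs{d_i}^2)$ has no usable analogue: $\sigma(N^*N)$ (after the shift) may accumulate at $M^2$, so the resolvent $(M^2-N^*N)^{-1}$ implicit in your formula is unbounded, and a near-maximizing sequence can carry spectral mass near the top with arbitrary phases, so nothing forces $\re\bigl(s_n\scal{b,u_n}\bigr)\le 0$ asymptotically; the ``concentration'' you invoke is the whole difficulty, not a routine limit. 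The paper sidesteps attainment entirely by a different device: using the spectral measure $E$ of $N$ and the unitaries $U=\int\phi\,dE$ ($\abs{\phi}=1$) commuting with $N$, it maximizes over phases first, expressing $\norm{A}^2$ and $\norm{A-I}^2$ as suprema over $t\in[0,1]$ and unit $\xi\in\mathcal L$ of the explicit functionals \eqref{noa} and \eqref{noa1}, in which the cross term enters through its modulus $2\sqrt{t(1-t)}\int\abs{\zeta}\abs{\scal{dE(\zeta)\xi,b}}$. After normalizing $\sigma(A)\subset\{\re z\geq 1\}$, the functional \eqref{noa} dominates \eqref{noa1} pointwise by at least $1$ (via $\abs{\zeta}^2-\abs{\zeta-1}^2\geq 1$ and $\abs{\zeta}\geq\abs{\zeta-1}$), whence $\norm{A}^2\geq\norm{A-I}^2+1$, contradicting the minimality of $\norm{A-\lambda I}$ at $\lambda=0$ with no maximizing vector ever needed. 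Replacing your sign analysis along a near-maximizing sequence by this phase-maximization (a pointwise comparison of two suprema over the same parameter set) is the missing idea that closes your argument.
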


\begin{proof}According to the definition of almost normality, $A$ can be represented in the matrix form \eq{an} A=\begin{bmatrix} N & b \\ 0 & \mu\end{bmatrix} \en with respect to the partition $\mathcal H =\mathcal L\oplus\C$. Here $N\in B(\mathcal L)$ is a normal operator, $b$ can be identified with a vector in $\mathcal L$, and $\mu\in\C$. Observe that $\sigma(A)=\sigma(N)\cup\{\mu\}$. 
	
Suppose \eqref{stan} does not hold. By shifting, rotating and scaling $A$ we may then without loss of generality assume that $\St(A)=0$ while $\sigma(A)\subset\{z\colon \re z\geq 1\}$.
	
	Any unit vector $x\in\mathcal H$ can, up to inconsequential unimodular scalar mutliple, be written as $x=[\sqrt{t}\xi,\sqrt{1-t}]^T$. Here $\xi$ is a unit vector in $\mathcal L$ and $t\in[0,1]$. Then 
	\[ Ax=[\sqrt{t}N\xi+\sqrt{1-t}b,\sqrt{1-t}\mu]^T, \] and 
	\eq{nAx} \norm{Ax}^2=t\scal{N^*N\xi,\xi}+(1-t)(\norm{b}^2+\abs{\mu}^2)+2\sqrt{t(1-t)}\re\scal{N\xi,b}. \en
	Using the symbolic calculus for normal operators (see e.g. \cite[Chapter 12]{Ru91}),
	\[ N=\int_{\sigma(N)}\zeta\,dE(\zeta), \quad N^*N=\int_{\sigma(N)}\abs{\zeta}^2\,dE(\zeta),\] 
	where $E$ is the spectral decomposition of $N$, so \eqref{nAx} can be rewritten as
	\begin{multline*} \norm{Ax}^2=t\int_{\sigma(N)}\abs{\zeta}^2\,d\scal{E(\zeta)\xi,\xi}+(1-t)(\norm{b}^2+\abs{\mu}^2)\\
	+2\sqrt{t(1-t)}\re\scal{\int_{\sigma(N)}\zeta\,dE(\zeta)\xi,b}.\end{multline*} 
	Considering vectors of the form $U\xi$ along with $\xi$, where \[ U=\int_{\sigma(N)}\phi(\zeta)\,dE(\zeta), \quad \abs{\phi}=1 \text{ on } \sigma(N), \]
	are unitary operators commuting with $N$, we conclude that $\norm{A}^2$ is the supremum of 
	\eq{noa} t\int\displaylimits_{\sigma(N)}\abs{\zeta}^2d\scal{E(\zeta)\xi,\xi}+(1-t)(\norm{b}^2+\abs{\mu}^2)
	+2\sqrt{t(1-t)}\int\displaylimits_{\sigma(N)}\abs{\zeta}\abs{\scal{dE(\zeta)\xi,b}}\en 	
	taken over $t\in [0,1]$ and unit vectors $\xi\in\mathcal L$. 
	
	A similar reasoning applied to $A-I$ in place of $A$ yields the conclusion that $\norm{A-I}^2$ is the supremum of  
	\begin{multline} \label{noa1} t\int_{\sigma(N)}\abs{\zeta-1}^2d\scal{E(\zeta)\xi,\xi}+(1-t)(\norm{b}^2+\abs{\mu-1}^2)\\
	+2\sqrt{t(1-t)}\int_{\sigma(N)}\abs{\zeta-1}\abs{\scal{dE(\zeta)\xi,b}}\end{multline}
	over the same set. 
	
	Since $\re\zeta,\re\mu\geq 1$, we have \[ \abs{\zeta}^2-\abs{\zeta-1}^2\geq 1, \quad 
	\abs{\mu}^2-\abs{\mu-1}^2\geq 1, \quad \abs{\zeta}\geq\abs{\zeta-1}, \] and so the difference between \eqref{noa} and \eqref{noa1} is not smaller than 
	\[ t\int_{\sigma(N)}\, d\scal{E(\zeta)\xi,\xi}+1-t =1. \]
	But then $\norm{A}>\norm{A-I}$, in contradiction with $\St(A)=0$. \end{proof} 

For convenience of readers interested in finite-dimensional setting only, let us provide a (shorter and more elementary) adaptation of this proof to the case of almost normal $A\in\C^{n\times n}$. 
\smallskip

{\em Proof of Theorem~\ref{th:an} in the finite dimensional setting.}	

Via an appropriate unitary similarity, for an almost normal $A\in\C^{n\times n}$ its representation \eqref{an} can be further specified to 
\eq{anf} A=\begin{bmatrix} \lambda_1 & 0 & \ldots & 0 & b_1 \\
	0 & \lambda_2 & \ldots & 0 & b_2 \\ \vdots & & \ddots  & &  \vdots \\ 0 & \ldots & 0 & \lambda_{n-1} & b_{n-1}\\
	0 & \ldots & \ldots & 0 & \mu \end{bmatrix}, \en 
with $b_j\geq 0$, $j=1,\ldots,n-1$. 

For $\xi=[\xi_1,\ldots,\xi_n]^T\in\C^n$,
denote $\abs{\xi_j}=t_j$, and without loss of generality set $\xi_n=t_n$. Since the $j$-th entry of $A\xi$ is
$\lambda_j\xi_j+b_jt_n$, its maximal absolute value is attained when \eq{argf} \arg\xi_j=-\arg\lambda_j \text{ if } \lambda_j b_jt_n\neq 0, \quad j=1,\ldots,n-1.\en  	
So, condition \eqref{argf} holds for $\xi$ maximazing $\norm{A\xi}/\norm{\xi}$. Therefore, for such $\xi$
\eq{scalalm}
\scal{A\xi,\xi}=\sum_{j=1}^{n-1}\left(\lambda_jt_j^2+\overline{\xi_j}b_jt_n\right)+\mu t_n^2
 \en 
is a linear combination of the points in $\sigma(A)$ with non-negative  coefficients not all equal to zero. 

Suppose now that \eqref{stan} does not hold. Shifting and rotating $A$ (as it was done in the proof for the general setting) we may without loss of generality assume that $\St(A)=0$ while $\lambda_1,\ldots, \lambda_{n-1},\mu$ all have positive real parts. According to \eqref{scalalm}, $\scal{A\xi,\xi}$ then also has a positive real part and is therefore different from zero. This is in contradiction with \eqref{stw0}. \qed

Recall that an almost normal matrix is {\em pure} if it is unitarily irreducible. According to \cite[Theorem 2.1]{MoSp} this is the case if and only if in \eqref{anf} all $\lambda_j$ are distinct and $b_j\neq 0$,
$j=1,\ldots n-1$.
The following refinement of the inclusion \eqref{stan} in the finite dimensional case is of some interest.
\begin{cor}\label{co:stanf} Let $A$ be as in \eqref{anf} with all $b_j$ different from zero. Then $\St(A)$ lies in the relative interior of $\conv\{\lambda_1,\ldots,\lambda_{n-1},\mu\}$.\end{cor}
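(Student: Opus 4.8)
The plan is to reduce the problem, via \eqref{stw0}, to locating the (essentially unique) norm--maximizing vector of $A$, and then to read off $\St(A)$ directly as a convex combination of the eigenvalues $\lambda_1,\dots,\lambda_{n-1},\mu$ in which \emph{every} coefficient is strictly positive. Recall the elementary fact that a point lying in a polytope $P=\conv\{z_1,\dots,z_m\}$ belongs to its relative interior if and only if it admits a representation as a convex combination of the $z_k$ with all coefficients positive; so it suffices to produce such a representation of $\St(A)$.

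First I would shift and assume without loss of generality that $\St(A)=0$, so that \eqref{stw0} gives $0\in W_0(A)$. In the finite-dimensional setting $W_0(A)=W(B)$, where $B$ is the compression of $A$ onto the eigenspace $\mathcal M$ of $A^*A$ associated with its largest eigenvalue $\rho=\norm{A}^2$. Computing $A^*A$ from \eqref{anf} yields the arrowhead matrix whose diagonal is $(\abs{\lambda_1}^2,\dots,\abs{\lambda_{n-1}}^2,\sum_j b_j^2+\abs{\mu}^2)$ and whose off-diagonal entries in the last column are $b_j\overline{\lambda_j}$. The eigenvalue equation $A^*A\,\xi=\rho\,\xi$ then gives, for $j<n$, the relation $\xi_j=\dfrac{b_j\overline{\lambda_j}}{\rho-\abs{\lambda_j}^2}\,\xi_n$, provided $\rho\neq\abs{\lambda_j}^2$.

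The key step, and the one I expect to be the main obstacle, is to show that in fact $\rho>\abs{\lambda_j}^2$ strictly for every $j$; this is precisely where the hypothesis $b_j\neq0$ is used. Since $\norm{A}\ge\abs{\lambda_j}$ always, suppose equality held at an index $k$ realizing $\abs{\lambda_k}=\max_j\abs{\lambda_j}$. A short computation with the trial vector $\cos\theta\,e_k+\sin\theta\,e_n$ (choosing the phase of the $e_n$--component so that $\lambda_k$ and $b_k$ align, or simply taking $e_n$ when $\lambda_k=0$) shows, using $b_k\neq0$ (respectively $\norm{Ae_n}^2=\sum_j b_j^2+\abs{\mu}^2>0$), that $\norm{A\,(\cdot)}$ strictly exceeds $\abs{\lambda_k}$ for small $\theta>0$ — a contradiction. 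Granting $\rho>\abs{\lambda_j}^2$ for all $j$, the displayed relations force $\xi_n\neq0$ (otherwise $\xi=0$), so $\mathcal M$ is one-dimensional (this recovers the purity criterion of \cite{MoSp}); consequently $W_0(A)$ is the single point $\scal{A\xi,\xi}$, and hence $\scal{A\xi,\xi}=0$.

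Finally I would substitute the relations for $\xi_j$ into $\scal{A\xi,\xi}$. Since $b_j\xi_n\overline{\xi_j}=\lambda_j\,b_j^2\abs{\xi_n}^2/(\rho-\abs{\lambda_j}^2)$, the expression collapses — this is the computation underlying \eqref{scalalm}, now made explicit — to $\scal{A\xi,\xi}=\sum_{j<n}c_j\lambda_j+c_n\mu$ with $c_j=\abs{\xi_j}^2+\dfrac{b_j^2\abs{\xi_n}^2}{\rho-\abs{\lambda_j}^2}$ and $c_n=\abs{\xi_n}^2$. Each $c_j$ is strictly positive — crucially even at an index where $\lambda_j=0$, for there the first summand vanishes but the second does not — and $c_n>0$. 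Dividing through by $\sum_{j<n}c_j+c_n$ exhibits $0=\St(A)$ as a convex combination of $\lambda_1,\dots,\lambda_{n-1},\mu$ with all coefficients positive, which by the fact recalled above places it in the relative interior of $\conv\{\lambda_1,\dots,\lambda_{n-1},\mu\}$, as claimed.
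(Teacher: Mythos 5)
Your proof is correct, and it takes a more constructive route than the paper's, though the two share their main ingredients. Both rest on \eqref{stw0} and on the arrowhead structure of $A^*A$ for the matrix \eqref{anf}: the paper also establishes $\norm{A}^2>\abs{\lambda_j}^2$ (directly, via the row bound $\norm{A}\geq\sqrt{\abs{\lambda_j}^2+b_j^2}$ --- a shortcut you could substitute for your perturbation argument with trial vectors $\cos\theta\,e_k+\sin\theta\,e_n$) and also reads off from the eigenvector equation that $\xi_n\neq 0$ while $\xi_j$ can vanish only when $\lambda_j=0$. The divergence is in how the conclusion is extracted. The paper argues by contradiction: assuming $\St(A)=0$ lies outside the relative interior, it rotates $A$ so that $\re\lambda_j\geq 0$, $\re\mu\geq 0$ with at least one inequality strict, and uses only the phase alignment \eqref{argf} together with \eqref{scalalm} to get $\re\scal{A\xi,\xi}>0$, contradicting $0\in W_0(A)$; no explicit eigenvector and no characterization of the relative interior are needed, only qualitative sign information. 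You instead solve the eigenvector equation explicitly, $\xi_j=b_j\overline{\lambda_j}\xi_n/(\rho-\abs{\lambda_j}^2)$, deduce that the top eigenspace of $A^*A$ is one-dimensional --- so $W_0(A)$ is the singleton $\{\scal{A\xi,\xi}\}$, which makes the application of \eqref{stw0} particularly clean and recovers in passing the purity criterion of \cite{MoSp} --- and then exhibit $\St(A)$ as a convex combination of $\lambda_1,\ldots,\lambda_{n-1},\mu$ with all coefficients strictly positive, finishing with the standard fact that such representations characterize the relative interior of a polytope. Your version buys extra structural information (simplicity of the maximal eigenvalue of $A^*A$, explicit weights $c_j$, $W_0(A)$ a point) at the cost of the explicit computation and the auxiliary relint lemma; the paper's version is shorter and avoids both. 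One detail you handled exactly right, and which is the crux of where $b_j\neq 0$ enters: at an index with $\lambda_j=0$ one has $\xi_j=0$, yet your coefficient $c_j=\abs{\xi_j}^2+b_j^2\abs{\xi_n}^2/(\rho-\abs{\lambda_j}^2)$ remains strictly positive through its second summand, so the degenerate vertices of the hull still receive positive weight.
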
 
\begin{proof} We will continue using the notation $\abs{\xi_j}=t_j$ along with the convention $\xi_n=t_n$. Since $\norm{A}\geq\sqrt{\abs{\lambda_j}^2+b_j^2}$, under the condition imposed on $A$ we have $\norm{A}>\abs{\lambda_j}$  for all $j=1,\ldots,n-1$. So, $\xi$ cannot be orthogonal to $e_n$, i.e.,  $t_n>0$. 

Moreover, since 
\[  A^*A=\begin{bmatrix} \abs{\lambda_1}^2 & 0 & \ldots & 0 & \overline{\lambda_1}b_1 \\
0 & \abs{\lambda_2}^2 & \ldots & 0 & \overline{\lambda_2}b_2 \\ \vdots & & \ddots  & &  \vdots \\ 0 & \ldots & 0 & \abs{\lambda_{n-1}}^2 & \overline{\lambda_{n-1}}b_{n-1}\\
\lambda_1 b_1 & \lambda_2 b_2 & \ldots & \lambda_{n-1}b_{n-1} & \mu \end{bmatrix}, \]
the $j$-th entry of $A^*A\xi$ is $\abs{\lambda_j}^2\xi_j+\overline{\lambda_j}b_jt_n$ ($j=1,\ldots,n-1$). So, $t_j$ can only equal zero if $\lambda_j=0$.  

With these observations at hand, suppose that $\St(A)$ is not in the relative interior of $\conv\{\lambda_1,\ldots,\lambda_{n-1},\mu\}$. As in the proof above,  we may assume that $\St(A)=0$, but instead of $\re\lambda_j, \re \mu$ being positive we have a weaker condition $\re\lambda_j\geq 0, \re\mu\geq 0$, with at least one of the inequalities strict. Since now all $t_j$ and $b_j$ are non-zero, we still may conclude from \eqref{scalalm} that $\re\scal{A\xi,\xi}>0$, in contradiction with \eqref{stw0}.   
\end{proof} 
Note that all $A\in\C^{2\times 2}$ are either normal or pure almost normal, while  Proposition~\ref{th:2by2} shows that for such matrices the statement of Corollary~\ref{co:stanf} holds in both cases. However, already for $n=3$ there exist unitarily reducible and almost normal (or even normal) matrices $A$ with $\St(A)$ lying on the relative boundary of $\conv\sigma(A)$.   	

\section{3-by-3 matrices with singleton spectra. Auxiliary statements}\label{s:tri} 

Let $A\in\C^{3\times 3}$ be such that its spectrum is a singleton: $\sigma(A)=\{\lambda\}$. Then, up to a unitary similarity, \eq{3by3} A=\begin{bmatrix} \lambda & x & y\\ 0 & \lambda & z \\0 & 0 &\lambda\end{bmatrix}. \en 

\begin{prop}\label{th:3by30}The matrix \eqref{3by3} has $\St(A)=\lambda$ if and only if $xyz=0$. \end{prop}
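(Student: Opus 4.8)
The plan is to use \eqref{stw0} as the working criterion and reduce at once to the nilpotent case. Since $\St(A)=\lambda$ holds exactly when $0\in W_0(A-\lambda I)$, I would replace $A$ by $A-\lambda I$, so that it suffices to treat the nilpotent matrix
\[ A=\begin{bmatrix} 0 & x & y\\ 0 & 0 & z \\0 & 0 &0\end{bmatrix} \]
and prove that $0\in W_0(A)$ if and only if $xyz=0$. I would then split the two implications, handling the ``if'' direction through Theorem~\ref{th:xy} and the ``only if'' direction through the description $W_0(A)=W(B)$, with $B$ the compression of $A$ onto the maximal eigenspace of $A^*A$.

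For the ``if'' direction, suppose one of $x,y,z$ vanishes. In each case the surviving nonzero entries of $A$ map a coordinate line into its complement (and vice versa), which lets me split $\C^3$ into a line $\mathcal L$ and $\mathcal L^\perp$ on both of which the compression of $A$ is zero: I would take $\mathcal L=\Span\{e_1\}$ when $z=0$, $\mathcal L=\Span\{e_3\}$ when $x=0$, and $\mathcal L=\Span\{e_2\}$ when $y=0$. A direct check confirms that both compressions vanish, so Theorem~\ref{th:xy} (with $a=0$) gives $\St(A)=0$, i.e.\ $0\in W_0(A)$.

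For the converse I would argue by contraposition, assuming $x,y,z$ all nonzero. A short computation yields
\[ A^*A=\begin{bmatrix} 0 & 0 & 0 \\ 0 & \abs{x}^2 & \overline{x}y \\ 0 & x\overline{y} & \abs{y}^2+\abs{z}^2 \end{bmatrix}, \]
so $A^*A$ kills $e_1$ and acts on $\Span\{e_2,e_3\}$ as the Hermitian block $M=\begin{bmatrix} \abs{x}^2 & \overline{x}y \\ x\overline{y} & \abs{y}^2+\abs{z}^2\end{bmatrix}$. Since $\det M=\abs{x}^2\abs{z}^2>0$ and $\trace M>0$, both eigenvalues of $M$ are positive, whence $\norm{A}^2$ is the larger of them and the maximal eigenspace of $A^*A$ lies inside $\Span\{e_2,e_3\}$. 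Because $x,y\neq 0$ the off-diagonal entry $\overline{x}y$ is nonzero, so $M$ is not a scalar multiple of $I$ and therefore has two distinct eigenvalues; hence the maximal eigenspace is a single line, spanned by a unit vector $v=(0,v_2,v_3)$, and the same nonzero-off-diagonal property forces $v_2,v_3\neq 0$ (an eigenvector with a zero coordinate would make $\overline{x}y$ vanish). Consequently $W_0(A)=\{\scal{Av,v}\}$ is a single point, and since $v_1=0$ one gets $\scal{Av,v}=z\,v_3\overline{v_2}\neq 0$, so $0\notin W_0(A)$ and $\St(A)\neq 0$.

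The main obstacle is precisely this eigenspace analysis: one must rule out a degenerate two-dimensional maximal eigenspace and guarantee that both coordinates of the maximizing vector are nonzero, and I expect both points to hinge on the single fact $\overline{x}y\neq 0$. It is worth noting that the boundary case in which $M$ has a repeated eigenvalue (forcing a two-dimensional maximal eigenspace) arises only when $y=0$, which is consistent with $xyz=0$ and so falls under the ``if'' direction already settled.
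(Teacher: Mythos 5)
Your proof is correct. The ``only if'' half is essentially the paper's own argument: both of you compute $(A-\lambda I)^*(A-\lambda I)$, note that when $\overline{x}y\neq 0$ its maximal eigenvalue is simple with eigenvector $[0,\xi_2,\xi_3]^T$, $\xi_2,\xi_3\neq 0$, and conclude via \eqref{stw0} that $\scal{(A-\lambda I)\xi,\xi}=z\overline{\xi_2}\xi_3\neq 0$ when $z\neq 0$; your computation $\det M=\abs{x}^2\abs{z}^2>0$ and the nonzero-off-diagonal argument merely spell out what the paper asserts without proof, and your closing observation that a repeated eigenvalue of $M$ forces $y=0$ is accurate (with $x=0$, equality of the diagonal entries would force $y=z=0$). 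Where you genuinely diverge is the ``if'' half: the paper verifies it directly---when $xy=0$ one of $e_2,e_3$ maximizes $\norm{(A-\lambda I)\xi}$ while the corresponding quadratic form vanishes, and the case $z=0$, $xy\neq 0$ falls out of the ``if and only if'' in its first case---whereas you handle all three vanishing patterns uniformly by exhibiting a coordinate line $\mathcal L$ (namely $\Span\{e_1\}$, $\Span\{e_3\}$, $\Span\{e_2\}$ for $z=0$, $x=0$, $y=0$ respectively) on which and on whose orthogonal complement the shifted matrix compresses to zero, then invoking Theorem~\ref{th:xy} with $a=0$. Your route trades a small amount of self-containedness for structure: it requires no comparison of eigenvalues of $(A-\lambda I)^*(A-\lambda I)$ in the degenerate cases, it explains \emph{why} $\St(A)=\lambda$ holds there (the shifted matrix has scalar diagonal blocks with respect to a suitable decomposition, exactly the mechanism of Section~\ref{s:sdb}, with the case $y=0$ being an instance of Corollary~\ref{co:tri}), and it would extend verbatim to analogous sparsity patterns in higher dimensions, while the paper's check is shorter and keeps the proposition independent of Theorem~\ref{th:xy}.
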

Note that for matrices \eqref{3by3} $\conv\sigma(A)=\{\lambda\}$. So, condition $xyz=0$ is actually the criterion for $St(A)\in\conv\sigma(A)$ to hold in this case. 

\begin{proof} To make use of \eqref{stw0}, compute 

\[  (A-\lambda I)^* (A-\lambda I)= \begin{bmatrix} 0 & 0 & 0 \\ 0 & \abs{x}^2 & \overline{x}y \\
0 & x\overline{y} & \abs{y}^2+\abs{z}^2\end{bmatrix}. \]

If $xy\neq 0$, then the maximal eigenvalue of $ (A-\lambda I)^* (A-\lambda I)$ is simple, and the respective eigenvector is $\xi=[0,\xi_2,\xi_3]^T$ with $\xi_2,\xi_3\neq 0$. But then $\scal{(A-\lambda I)\xi,\xi}=z\overline{\xi_2}\xi_3$.
So, condition $W_0(A-\lambda I)\ni 0$ holds in this case if and only if $z=0$.

On the other hand, if $xy=0$, then one of the standard basis vectors $e_2$ or $e_3$ maximizes the norm of  
$A-\lambda I$ while $\scal{(A-\lambda I)e_j,e_j}=0$. 
\end{proof} 

Our goal in the next Section~\ref{s:trim} is to compute $\St(A)$ for matrices \eqref{3by3} with $xyz\neq 0$. An intermediate step in this direction is a characterization of such matrices with $\lambda=1$ and $\St(A)=0$. By an additional (in this case, diagonal) unitary similarity we can arrange that $x,z>0$. 

\begin{prop}\label{th:crit10} Let the matrix $A$ be given by \eqref{3by3} in which $\lambda=1$ and $x,z>0$. Let also $\St(A)=0$. Then $y<0$ and \eq{res} \det \begin{bmatrix}
		a_1 & 0 & b_1 & 0 & 0 \\ 
		a_2 & a_1 & b_2 & b_1 & 0 \\ 
		a_3 & a_2 & b_3 & b_2 & b_1 \\ 
		a_4 & a_3 & 0 & b_3 & b_2 \\ 
		0 & a_4 & 0 & 0 & b_3
	\end{bmatrix}=0, \en
where
 \eq{ab} \begin{aligned} & a_1 = 2x,\ a_2 = -3xz, \ a_3 = xz^2,\ a_4 = 2y - xz, \ 
	b_1 = 4x^2 + y^2 + z^2 - xyz, \\ & b_2 = -(z^3 + x^2z + y^2z + 6xy - xyz^2), \ b_3 = x^2 + 4y^2 + z^2 - xyz.\end{aligned} \en 
\end{prop}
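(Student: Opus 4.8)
The plan is to translate $\St(A)=0$ into the maximal--numerical--range condition $0\in W_0(A)$ via \eqref{stw0}, and then to read off both conclusions from the structure of a maximal right singular vector of $A$, i.e.\ a top eigenvector $\xi$ of $M:=A^*A$. Since $\St(A)=0\ne1=\lambda$, Proposition~\ref{th:3by30} already gives $xyz\ne0$, so $y\ne0$. I would compute $M$ explicitly, note that its largest eigenvalue $s=\norm{A}^2$ satisfies $s>1$ (because $\norm{A}\ge\norm{Ae_2}=\sqrt{x^2+1}>1$), and write down a top eigenvector in closed form as the third column of $\adj(M-sI)$, namely $\xi=\bigl(xz+(s-1)y,\; sxy+(s-1)z,\; (s-1)^2-sx^2\bigr)$, whose third component is automatically real. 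Because $W_0(A)$ is carried by the maximal eigenspace, the hypothesis $0\in W_0(A)$ supplies a maximal unit vector with $\scal{A\xi,\xi}=0$.

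To prove $y<0$ I would argue in two stages. For reality, expand $\scal{A\xi,\xi}=\norm{\xi}^2+x\overline{\xi_1}\xi_2+y\overline{\xi_1}\xi_3+z\overline{\xi_2}\xi_3$ and isolate the imaginary part; using the closed form above it collapses to $\im\scal{A\xi,\xi}=-xzs\,\xi_3\,\im y$. Since $x,z,s>0$, vanishing of this forces $\xi_3=0$ whenever $\im y\ne0$; but $\xi_3=0$ puts $\xi$ in $\Span\{e_1,e_2\}$, and then the third eigen-equation together with the fixed sign of the top eigenvector of the leading $2\times2$ block forces $y$ to be real after all, a contradiction. Hence $y\in\R$. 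For negativity, once $y\in\R$ a value $y>0$ would make $M=A^*A$ entrywise nonnegative and irreducible, so by Perron--Frobenius its top eigenvector could be taken strictly positive, giving $\scal{A\xi,\xi}=\norm{\xi}^2+x\xi_1\xi_2+y\xi_1\xi_3+z\xi_2\xi_3>0$, contradicting $\scal{A\xi,\xi}=0$; as $y\ne0$, this leaves $y<0$. I expect this Perron--Frobenius shortcut to be the cleanest route to the sign.

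With $y$ real the whole problem is real: $M$ is real symmetric and $\xi$ may be taken real. After checking $\xi_3\ne0$ (a top eigenvector inside $\Span\{e_1,e_2\}$ would again force $\scal{A\xi,\xi}>0$), I normalize $\xi_3=1$ and set $t=-\xi_2$. Writing the three eigenvector equations together with the condition $\scal{A\xi,\xi}=0$, I would eliminate $s$ from the first equation, then use $\scal{A\xi,\xi}=0$ to reduce the two remaining (quadratic-in-$\xi_1$) eigen-equations to linear ones; solving for $\xi_1$ and back-substituting yields, on one hand, the cubic relation $p(t)=a_1t^3+a_2t^2+a_3t+a_4$ (the characteristic constraint expressed in $t$) and, after reducing the leftover quadratic constraint modulo $p$, the quadratic $q(t)=b_1t^2+b_2t+b_3$, with the $a_i,b_i$ exactly as in \eqref{ab}. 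Both polynomials must vanish at the same $t$, so $p$ and $q$ have a common root, which is precisely the vanishing of their Sylvester resultant --- the determinant in \eqref{res}.

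The main obstacle is twofold. The elimination is lengthy bookkeeping, and matching the reduced quadratic remainder exactly to the $b_i$ in \eqref{ab} (including the precise middle coefficient $b_2$) is the delicate algebraic point. Conceptually, the care is in guaranteeing a \emph{real} maximal eigenvector with $\xi_3\ne0$: in the non-generic case of a repeated top eigenvalue one cannot use simplicity, and instead must observe that $0\in W(B)$ for the real compression $B$ onto the maximal eigenspace still yields a real unit vector with $\scal{A\xi,\xi}=0$, so the resultant condition \eqref{res} persists as a necessary one.
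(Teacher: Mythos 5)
Your overall route is essentially the paper's: translate $\St(A)=0$ through \eqref{stw0} into a unit top eigenvector $\xi$ of $A^*A$ with $\scal{A\xi,\xi}=0$; rule out $y>0$ by Perron applied to the then entrywise positive matrix \eqref{a*a} (this is verbatim the paper's negativity step); reduce to a real system, eliminate down to a cubic with coefficients $a_1,\dots,a_4$ and a quadratic with coefficients $b_1,b_2,b_3$ in the single variable $t=-\xi_2/\xi_3$, and recognize \eqref{res} as their Sylvester resultant --- the paper's system \eqref{sys2} is exactly your pair $(p,q)$. Your adjugate computation is also correct: with $\xi$ the third column of $\adj(A^*A-sI)$ one indeed gets $\im\scal{A\xi,\xi}=-sxz\,\xi_3\,\im y$, and your $\xi_3=0$ fallback is sound (interlacing puts $s$ at the top of the spectrum of the positive leading $2\times 2$ block, Perron fixes the phases of $\xi_1,\xi_2$, and the third eigen-equation then makes $y$ real, contradicting $\im y\neq 0$).

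There is, however, one genuine gap: your proof that $y\in\R$ works only when the top eigenvalue $s$ of $M=A^*A$ is simple. If $s$ has multiplicity two, then $M-sI$ has rank one and $\adj(M-sI)=0$, so your closed-form vector is the zero vector, the identity $\im\scal{A\xi,\xi}=-sxz\,\xi_3\,\im y$ is vacuous, and the $W_0$-witness is an arbitrary unit vector in a two-dimensional eigenspace. Your closing remark about the repeated case does not repair this: the ``real compression'' you invoke already presupposes $y\in\R$ (so that $M$ is real symmetric), i.e., it patches only the step \emph{after} reality. The paper avoids the issue entirely: for any norm-maximizing vector the phase optimality $\arg\xi_1=\arg(x\xi_2+y\xi_3)$ (its \eqref{arg}), fed into the imaginary part of the second eigen-equation \eqref{y}, gives $xt_3(\mu+1)\im y=0$ and hence $\im y=0$, with no simplicity assumption anywhere. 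Alternatively, you can close the gap inside your own framework: multiplicity two means $M-sI=-c\,ww^*$ with $c>0$; since $x,z>0$, comparing the entries $M_{12}=x$, $M_{13}=y$, $M_{23}=xy+z$ forces all $w_j\neq 0$, and then $x(xy+z)/y=M_{22}-s=x^2+1-s$, i.e.\ $xz/y=1-s$; as $s\geq 1+x^2>1$ this yields $y=xz/(1-s)$, real and negative, so degeneracy is in fact harmless. With either repair your argument is complete, modulo the admitted bookkeeping of matching the coefficients in \eqref{ab}.
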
 
\begin{proof}According to \eqref{stw0}, there exists a unit vector $\xi=[\xi_1,\xi_2,\xi_3]^T$ such that $\norm{A\xi}=\norm{A}$ and $\scal{A\xi,\xi}=0$. Let us denote $\abs{\xi_j}=t_j$; without loss of generality $\xi_3\geq 0$ and therefore $\xi_3=t_3$. 

A direct computation shows that 
\begin{align*}
A\xi = \begin{bmatrix}
\xi_1 + x \xi_2 + y \xi_3 \\ \xi_2 + z\xi_3 \\ \xi_3
\end{bmatrix}
\end{align*}
and so 
\eq{norm}
\norm{A \xi}^2 = t_3^2 + \abs{\xi_1 + x \xi_2 + y \xi_3}^2 + \abs{\xi_2 + z \xi_3}^2.
\en
With $\xi_2,\xi_3$ and $\abs{\xi_1}$ being fixed, the right hand side of \eqref{norm} is maximal when 
\eq{arg}
\arg{\xi_1} = \arg{(x \xi_2 + y \xi_3)},
\en
and thus can be rewritten as 
\begin{align*}
t_1^2 + t_3^2 + \abs{x \xi_2 + y \xi_3}^2 + \abs{\xi_2 + z \xi_3}^2 + 2t_1 \abs{x \xi_2 + y \xi_3}.
\end{align*}
At the same time 
\begin{align*}
\scal{A\xi,\xi} = 1 + \bar{\xi_1}\left(x \xi_2 + y\xi_3 \right) + z\bar{\xi_2}\xi_3= 1 + \bar{\xi_1}\left(x \xi_2 + yt_3 \right) + z\bar{\xi_2}t_3, 
\end{align*}
which due to \eqref{arg} implies 
\eq{sc0}
\scal{A\xi,\xi} = 1 + t_1\abs{x\xi_2 + yt_3} + z\overline{\xi_2}t_3.  
\en
Since $z>0$ and $t_3\geq 0$, condition $\scal{A\xi,\xi}=0$ can hold only if $t_3>0$, $\xi_2<0$ (and thus $\xi_2=-t_2$).

Furthermore, $\xi=[\xi_1,-t_2,t_3]^T$ is an eigenvector of $A^*A$ corresponding to its eigenvalue $\sigma:=\norm{A}^2$. Since 
\eq{a*a} A^*A= \begin{bmatrix} 1 & x & y \\ x & x^2+1 & xy+z \\ \overline{y} & x\overline{y}+z & \abs{y}^2+z^2+1 \end{bmatrix}, \en
equating the second entries of $A^*A\xi$ and $\sigma\xi$ we have in particular 
\eq{y} x\xi_1-t_2(x^2+1)+t_3(xy+z)=-\sigma t_2. \en
Due to \eqref{arg}, $\xi_1=\mu(-xt_2+yt_3)$ with some $\mu\geq 0$. Plugging this into \eqref{y} and 
taking the imaginary parts:
\[ \im \left(xyt_3(\mu+1)\right)= xt_3(\mu+1)\im y=0. \] 
So, $y\in\R$. According to \eqref{arg}, then also $\xi_1\in\R$. 

Suppose for a moment that $y>0$. Then \eqref{a*a} shows that the matrix $A^*A$ is entry-wise positive. By the Perron theorem, its maximal eigenvalue is simple and the respective eigenvectors have entries with coinciding arguments. This is in contradiction with $\xi_2,\xi_3$ having opposite signs. This proves the first assertion of the proposition: $y<0$.

Returning to \eqref{arg} again, we see that $\xi_1<0$. So finally $\xi=[-t_1,-t_2,t_3]^T$ with all $t_j$ positive. 

Plugging this into \eqref{sc0} and recalling that $t_1^2+t_2^2+t_3^2=1$:
\eq{s1} t_1^2+t_2^2+t_3^2+xt_1t_2-yt_1t_3-zt_2t_3=0. \en 
The collinearity of $A^*A\xi$ and $\xi$ yields two more homogeneous second degree equations in $t_j$: 
\eq{s2} xt_1^2-xt_2^2+x^2t_1t_2-(xy+z)t_1t_3+yt_2t_3=0 \en 
and 
\eq{s3} yt_1^2-yt_3^2+(xy+z)t_1t_2-(y^2+z^2)t_1t_3+xt_2t_3=0.\en 
Dividing \eqref{s1}--\eqref{s3} by $t_3^2$ and introducing new variables $u=t_1/t_3$, $v=t_2/t_3$: 
\eq{sys} 
\begin{aligned}
u^2+v^2+xuv-yu-zv+1 & = 0, \\
xu^2-xv^2+x^2uv-(xy+z)u+yv & =0, \\
yu^2+(xy+z)uv-(y^2+z^2)u+xv-y & =0.
 \end{aligned} 
\en 
From the first two equations of \eqref{sys} it follows that 
\eq{u} u= \frac{-2xv^2+(xz+y)v-x}{z}, \en
while the first, multiplied by $y$ and subtracted from the third, becomes 
\eq{s4} yv^2-zuv+z^2u-(x+yz)v+2y=0. \en 
Plugging \eqref{u} into \eqref{s4} and the first equation of \eqref{sys} yields the following system of two equations in one variable $v$: 
\eq{sys1}
\begin{aligned}
2xv^3 - 3xzv^2 + xz^2v + 2y -xz &= 0, \\
4x^2v^4 - \left(4xy + 6x^2z \right)v^3 + \left(4x^2 + y^2 + z^2 + 5xyz + 2x^2z^2 \right)v^2 &  \\ -\left(3x^2z + y^2z + z^3 + xyz^2 + 2xy \right)v + (x^2 + z^2 + xyz) & = 0.  
\end{aligned} \en 
By some additional equivalent transformations aimed at lowering the degrees of polynomials involved, the system \eqref{sys1} can be reduced to 
\eq{sys2} \begin{aligned} 
2xv^3 - 3xzv^2 + xz^2v + 2y -xz & = 0, \\
\left(4x^2 + y^2 + z^2 - xyz \right)v^2 - \left(z^3 + x^2z + y^2z + 6xy - xyz^2 \right)v & \\ + 
\left(x^2 + 4y^2 + z^2 - xyz \right) & = 0. \end{aligned} \en 
It remains to observe that the determinant in \eqref{res} is the resultant of the left hand sides of \eqref{sys2}
and thus its equality to zero is equivalent to the system \eqref{sys2} being consistent.   
\end{proof} 

\section{3-by-3 matrices with singleton spectra. Main result} \label{s:trim}

Admittedly, Proposition~\ref{th:crit10} does not look constructive. Nevertheless, it serves as the main ingredient in the construction of a procedure allowing to compute $\St(A)$ for all matrices of the form \eqref{3by3}. Due to Proposition~\ref{th:3by30}, only the case $xyz\neq 0$ needs to be considered. 

\begin{thm}\label{th:comp}Let $A$ be given by \eqref{3by3} with $xyz\neq 0$. Then $\St(A)=\lambda+\zeta$, where \eq{arg1} \arg\zeta=\arg(x\overline{y}z) \en  and $\abs{\zeta}$ is a positive root of the 5-th degree polynomial

\begin{multline}\label{s}
P_A(s)= \ 4 s^5 (u^2+v^2)(u^6 + 3 u^4 (v^2 + w^2) + (v^2 + w^2)^3 + 3 u^2 (v^4 - 7 v^2 w^2 + w^4)) \\ + 4 s^4 uvw (4 u^6 + 6 u^4 (2 v^2 - 3 w^2) + (v^2 + w^2)^2 (4 v^2 + w^2) + 6 u^2 (2 v^4 - 6 v^2 w^2 + w^4))  \\
+ 3 s^3 u^2 w^2 (2 u^6 + 7 v^6 + 13 v^4 w^2 + 6 v^2 w^4 + u^4 (11 v^2 - 5 w^2) + 2 u^2 (8 v^4 - 14 v^2 w^2 + w^4)) \\ + s^2 u^3 v w^3 (9 u^4 + 7 v^4 + 18 v^2 w^2 + 6 w^4 + 4 u^2 (4 v^2 - 3 w^2))\\ + s u^4 v^2 w^4 (-5 v^2 + 3 w^2) - 3u^5 v^3 w^5.  
\end{multline} \end{thm}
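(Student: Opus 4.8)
The plan is to reduce the computation to the normalized situation treated in Proposition~\ref{th:crit10}, exploiting the affine covariance of the Stampfli point, namely $\St(\alpha A+\beta I)=\alpha\St(A)+\beta$ for $\alpha,\beta\in\C$ with $\alpha\neq 0$ (immediate from $\norm{\alpha A+\beta I-\mu I}=\abs{\alpha}\,\norm{A-((\mu-\beta)/\alpha)I}$), together with the invariance of $\St$ under unitary similarity. Write $\zeta=\St(A)-\lambda$, so that the asserted identity $\St(A)=\lambda+\zeta$ holds by definition and the content is to identify $\zeta$; note $\zeta\neq 0$ by Proposition~\ref{th:3by30} since $xyz\neq 0$. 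Setting $N=A-\lambda I$ (nilpotent of the form \eqref{3by3} with $\lambda=0$, so $\St(N)=\zeta$), I would introduce
\[ M = I - \zeta^{-1}N = \begin{bmatrix} 1 & -x/\zeta & -y/\zeta \\ 0 & 1 & -z/\zeta \\ 0 & 0 & 1 \end{bmatrix}, \]
which has singleton spectrum $\{1\}$ and, by affine covariance, satisfies $\St(M)=1-\zeta^{-1}\St(N)=0$.

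Next I would apply a diagonal unitary similarity $D^{*}MD$ with $D=\diag(d_1,d_2,d_3)$, $\abs{d_j}=1$, chosen to render the $(1,2)$ and $(2,3)$ entries positive; this changes neither $\sigma(M)$ nor $\St(M)$. The resulting matrix $\tilde M$ is exactly of the form in Proposition~\ref{th:crit10} (eigenvalue $1$, positive super-diagonal, $\St=0$), so that proposition applies and forces its $(1,3)$ entry to be a negative real number. Now the triple product $M_{12}\overline{M_{13}}M_{23}$ is invariant under such a diagonal similarity (the phases $\bar d_1 d_2$, $d_1\bar d_3$, $\bar d_2 d_3$ multiply to $1$), while for $\tilde M$ it equals (positive)(negative)(positive)$<0$; hence $M_{12}\overline{M_{13}}M_{23}$ is a negative real. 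Substituting $M_{12}=-x/\zeta$, $M_{13}=-y/\zeta$, $M_{23}=-z/\zeta$ gives $-x\overline{y}z/(\abs{\zeta}^2\zeta)<0$, i.e. $x\overline{y}z/\zeta>0$, which is precisely \eqref{arg1}.

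To pin down $\abs{\zeta}$, set $u=\abs{x}$, $v=\abs{y}$, $w=\abs{z}$ and $s=\abs{\zeta}$. By unitary invariance of the moduli, the relevant entries of $\tilde M$ are $\tilde M_{12}=u/s$, $\tilde M_{23}=w/s$, and $\tilde M_{13}=-v/s$ (negative real, by the previous paragraph). Thus Proposition~\ref{th:crit10} applies to $\tilde M$ with its parameters $x,z$ replaced by $u/s,\,w/s$ and $y$ by $-v/s$, so the resultant condition \eqref{res}--\eqref{ab} must hold. I would then substitute $x\mapsto u/s$, $y\mapsto -v/s$, $z\mapsto w/s$ into \eqref{ab}, evaluate the $5\times5$ determinant in \eqref{res} (equivalently, the Sylvester resultant of the cubic and quadratic in \eqref{sys2}), and clear the powers of $s$ in the denominators; the resulting numerator is claimed to be a nonzero constant multiple of $P_A(s)$ in \eqref{s}. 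Since $s=\abs{\zeta}>0$ and this resultant vanishes, $s$ is then a positive root of $P_A$, as asserted.

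The conceptual work is entirely in the reduction of the first two paragraphs; the main obstacle is the purely computational verification in the last step that substituting $(u/s,-v/s,w/s)$ into \eqref{res}--\eqref{ab} and clearing denominators reproduces $P_A(s)$ exactly. This is a lengthy symbolic manipulation best carried out with a computer algebra system. Some care is needed to confirm that no relevant factor of $s$ is lost—the nonzero constant term $-3u^5v^3w^5$ of $P_A$ shows that $s=0$ is not a root and that all denominators have indeed been cleared—and to check that the numerator is the full fifth-degree polynomial $P_A$ (matching the stated leading coefficient $4(u^2+v^2)(\cdots)$) rather than a proper factor of it.
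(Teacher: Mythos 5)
Your proposal is correct and follows essentially the same route as the paper's proof: your $M=I-\zeta^{-1}N$ is exactly the paper's matrix $B$ in \eqref{StD}, the reduction via diagonal unitary similarity to Proposition~\ref{th:crit10} is identical, and the final substitution $x\mapsto u/s$, $y\mapsto -v/s$, $z\mapsto w/s$ into \eqref{ab}--\eqref{res} is precisely how the paper obtains $P_A$. Your derivation of \eqref{arg1} via the diagonal-similarity invariant $M_{12}\overline{M_{13}}M_{23}$ is a slightly tidier bookkeeping of the phases than the paper's explicit form \eqref{B1}, but it is the same argument in substance, and your sanity checks on the constant term and leading coefficient of $P_A$ are sensible additions to the (shared) computer-algebra step.
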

Here for the notational convenience we have set \eq{uvw} u=\abs{x},\ v=\abs{y}, \text{ and } w=\abs{z}.\en 

\begin{proof}Denoting $\St(A)-\lambda :=\zeta$, from Proposition~\ref{th:3by30} we have $\zeta\neq 0$. So, we may consider the matrix 
\eq{StD} B=-\zeta^{-1}(A-(\lambda+\zeta)I)= \begin{bmatrix}1 & -x/\zeta & -y/\zeta \\ 0 & 1 & -z/\zeta \\ 0 & 0 & 1\end{bmatrix}, \en 
for which $\St(B)=0$. A diagonal unitary similarity can be applied to put $B$ in the form 
\eq{B1} \begin{bmatrix}1 & u/\abs{\zeta} & -ye^{-i(\arg x+\arg z-2\arg\zeta)}/\zeta \\ 0 & 1 & w/\abs{\zeta} \\ 0 & 0 & 1\end{bmatrix}, \en
thus making Proposition~\ref{th:crit10} applicable. Consequently, the right upper entry of the matrix \eqref{B1} is negative. This is equivalent to \eqref{arg1} and also allows to rewrite this entry as $-v/\abs{\zeta}$. 

Replacing $x,y,z$ in \eqref{ab} by $u/s, -v/s$ and $w/s$, respectively, and expanding the determinant in \eqref{res}, we arrive at \eqref{s}.
\end{proof} 
Note that the polynomial \eqref{s} has an odd number of positive roots, and so there is at least one. If such a root is unique, it of course delivers the correct value of $\abs{\zeta}$. In case of several such roots, the inclusion $W_0(B)\ni 0$ should be checked in order to choose the ``right'' one. In all our numerical experiments this was the smallest positive root, but we do not have a proof that this is always the case. Numerical examples are shown below, one with a unique root for \eqref{s} and one where there are several positive roots. Direct computations of $\St(X)$ for various matrices $X$ throughout the paper were carried out by the Mathematica script \textit{NMinimize}. 

\smallskip
{\bf Example 1.}
Consider A as follows, 
\begin{align*}
A = \begin{bmatrix}
\lambda & 8  & - 1 \\ 0 & \lambda & 7 \\ 0 & 0 & \lambda
\end{bmatrix}.
\end{align*}
Then \eqref{StD} takes the form  
\begin{align*}
B = \begin{bmatrix}
1 & 8/ \abs{\zeta} & - 1/ \abs{\zeta} \\ 0 & 1 & 7/ \abs{\zeta} \\ 0 & 0 & 1
\end{bmatrix}.
\end{align*}
There is only one positive root, $s \approx 0.7003$, to the corresponding polynomial $P_B$, and indeed for this value of $ \abs{\zeta}$, $St(B) = 0$.  Since $\arg(x\overline{y}z) = \pi$, we then have that $\zeta \approx -0.7003$, implying that $St(A) \approx \lambda - 0.7003$ which is consistent with the value computed directly.

\smallskip
{\bf Example 2.}
Now let 
\begin{align*}
A = \begin{bmatrix}
\lambda & 8 & - 1 \\ 0 & \lambda & 7.5 \\ 0 & 0 & \lambda
\end{bmatrix}.
\end{align*}
Then \eqref{StD} takes the form  
\begin{align*}
B = \begin{bmatrix}
1 & 8/ \abs{\zeta} & - 1/ \abs{\zeta} \\ 0 & 1 & 7/ \abs{\zeta} \\ 0 & 0 & 1
\end{bmatrix}.
\end{align*}
The respective polynomial $P_B$ has three positive roots, approximately equal 0.833, 1.367, and 2.101.
Computations show that $\St(B)$ equals zero only for the matrix $B$ corresponding to the minimal value of $s$. So, $\abs{\zeta} \approx 0.833$.
Since $\arg(x\overline{y}z) = \pi$, it follows that $\zeta \approx -0.833$, implying that $St(A) \approx \lambda - 0.833$. As in Example~1, this is consistent with the value of $\St(A)$ computed directly.

Computation of the displacement $\zeta$ between the spectrum $\{\lambda\}$ of the matrix \eqref{3by3}  and its Stampfli point becomes much simpler under an additional condition $\abs{x}=\abs{z}$. This covers in particular the case $x=z$ in which $A$ is a triangular Toeplitz matrix.

\begin{thm}\label{th:toe}Let in \eqref{3by3} $0\neq \abs{x}=\abs{z}:=u$. Then $\St(A)=\lambda+\zeta$, where $\arg\zeta$ is determined by \eqref{arg1}, while  \eq{abs} \abs{\zeta}= \begin{cases} \frac{u^2v}{u^2-v^2} & \text{ if } \frac{v}{u}\leq\sqrt{7-4\sqrt{3}},\\ 
	\frac{u^2(2\sqrt{6u^2+v^2}-v)}{2(8u^2+v^2)} & \text{ otherwise}, \end{cases} \en
where $\abs{y}:=v$. In particular, \eq{abs1} \abs{\zeta}=\frac{2\sqrt{7}-1}{18}u \text{ if }v=u.\en   \end{thm}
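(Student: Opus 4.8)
The plan is to invoke Theorem~\ref{th:comp} and specialize the quintic $P_A(s)$ of \eqref{s} to the symmetric case $w=u$. The value of $\arg\zeta=\arg(x\overline y z)$ is delivered verbatim by \eqref{arg1} and requires no new work, so the entire task reduces to locating the correct positive root of $P_A(s)\big|_{w=u}$ and showing it equals $\abs{\zeta}$.

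First I would substitute $w=u$ into \eqref{s} and factor the resulting quintic. I expect the identity
$$P_A(s)\big|_{w=u}=\big[(u^2-v^2)s-u^2v\big]^2\cdot\big[4(8u^2+v^2)s^2+4u^2vs-3u^4\big]\cdot\big[(u^2+v^2)s+u^2v\big],$$
which is confirmed by expanding and matching coefficients against \eqref{s} (a finite, if lengthy, check; the leading coefficient $4(u^2+v^2)(u^2-v^2)^2(v^2+8u^2)$, the constant term $-3u^{10}v^3$, and the intermediate coefficients all agree). The last factor contributes only the negative root $-u^2v/(u^2+v^2)$; the squared linear factor contributes $s_1:=u^2v/(u^2-v^2)$, positive exactly when $v<u$; and the quadratic factor contributes $s_2:=\frac{u^2(2\sqrt{6u^2+v^2}-v)}{2(8u^2+v^2)}$ together with a negative partner. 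Thus the positive roots are precisely $s_1$ (when $v<u$) and $s_2$ (always), which are exactly the two branches appearing in \eqref{abs}.

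It remains to decide which of $s_1,s_2$ equals $\abs{\zeta}$, and this is the crux: $P_A(\abs{\zeta})=0$ is only a \emph{necessary} condition, since Proposition~\ref{th:crit10} runs in one direction, and we have no general proof that the smallest positive root is the right one. I would settle the matter by a continuity argument, using that $\St(A)$, hence $\abs{\zeta}$, depends continuously on the entry $v=\abs{y}$ (the minimizer of the convex function $\lambda\mapsto\norm{A-\lambda I}$ is unique and varies continuously with $A$). As $v\to0^+$ we have $xyz\to0$, so Proposition~\ref{th:3by30} forces $\St(A)\to\lambda$, i.e.\ $\abs{\zeta}\to0$; since $s_1\to0$ while $s_2\to u\sqrt6/8\neq0$, this pins $\abs{\zeta}=s_1$ for small $v$. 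As $v\to u^-$ we have $s_1\to+\infty$, so $\abs{\zeta}=s_2$ near $v=u$ (and for $v\ge u$ the quantity $s_1$ is not positive, leaving $s_2$ as the only candidate). Because $\abs{\zeta}$ is continuous and always lies in $\{s_1,s_2\}$, the active branch can switch only where $s_1=s_2$.

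Finally I would locate that crossing. Imposing that $s_1$ be a root of the quadratic factor (equivalently $s_1=s_2$) and clearing denominators reduces to $u^4-14u^2v^2+v^4=0$, i.e.\ $v^2/u^2=7-4\sqrt3$ (the value $7+4\sqrt3$ being excluded by $v<u$), equivalently $v/u=\sqrt{7-4\sqrt3}=2-\sqrt3$. Hence $\abs{\zeta}=s_1$ for $v/u\le\sqrt{7-4\sqrt3}$ and $\abs{\zeta}=s_2$ otherwise, which is exactly \eqref{abs}; setting $v=u$ in $s_2$ gives $\frac{2\sqrt7-1}{18}u$, establishing \eqref{abs1}. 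The main obstacle is this root-selection step, which the necessary-only polynomial criterion cannot resolve by itself; the continuity/limiting argument, anchored by Proposition~\ref{th:3by30} at $v=0$, is what makes the choice rigorous, whereas the factorization is routine algebra.
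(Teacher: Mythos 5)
Your proposal is correct, and the common part of the two arguments coincides: like the paper, you substitute $w=u$ into \eqref{s} and factor out the square of a linear term with root $s_1=u^2v/(u^2-v^2)$, leaving roots $-u^2v/(u^2+v^2)$ and $s_2=u^2\bigl(-v\pm 2\sqrt{6u^2+v^2}\bigr)/\bigl(2(8u^2+v^2)\bigr)$; your factorization is right (its constant term $-3u^{10}v^3$ matches \eqref{s}, which incidentally shows that the final term of the cubic as printed in the paper should read $-3(s+v)u^6$ rather than $-3(s-v)u^6$ --- the roots the paper lists agree with yours). Where you genuinely diverge is the root-selection step in the regime $u>v$. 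The paper settles it by a direct, purely local verification: it forms the matrix $C$ obtained from \eqref{B1} by plugging in the candidate with $\abs{\zeta}=s_1$, computes that the maximal eigenvalue $u^2/v^2$ of $C^*C$ has a two-dimensional eigenspace, and shows via \eqref{stw0} that $0\in W_0(C)$ holds if and only if the real quadratic \eqref{t} has a real root, i.e., if and only if $v^2/u^2-14+u^2/v^2\geq 0$, i.e., $v^2/u^2\leq 7-4\sqrt{3}$; this is an if-and-only-if check, so failure of it forces $\abs{\zeta}=s_2$. You instead run a homotopy in $v$: continuity of $v\mapsto \St(A_v)$, the anchor $\abs{\zeta}\to 0$ as $v\to 0^+$ from Proposition~\ref{th:3by30}, the blow-up $s_1\to+\infty$ as $v\to u^-$ against boundedness of $\St$, and the fact that the branches cross in $(0,u)$ only where $u^4-14u^2v^2+v^4=0$ --- amusingly, the same quadratic form that appears as the paper's discriminant condition, arrived at from a different direction. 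Both routes are rigorous. The paper's buys self-containment: it needs no information about neighboring matrices and doubles as an independent verification through \eqref{stw0}. Yours avoids the eigenspace computation entirely, but leans on continuity of the Stampfli point, which the paper nowhere states; you should record it as a lemma (it does follow easily from Stampfli's uniqueness of the minimizer plus compactness: minimizers of $\norm{A_n-\lambda I}$ stay in the disk $\abs{\lambda}\leq 2\sup_n\norm{A_n}$, and any limit point of $\St(A_n)$ minimizes $\norm{A-\lambda I}$, hence equals $\St(A)$). A small bonus of your route: it proves that in the case $\abs{x}=\abs{z}$ the correct value of $\abs{\zeta}$ is always the smallest positive root of \eqref{s}, a fact the paper only observes empirically after Theorem~\ref{th:comp}.
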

\begin{proof}Observe first of all that \eqref{abs} means that $\zeta=0$ if $y=0$, which is in agreement with Proposition~\ref{th:3by30}. So, only the case $y\neq 0$ is of interest. 
	
Further, under the requirement $\abs{x}=\abs{z}$ in the notation \eqref{uvw} we have $u=w$. Polynomial \eqref{s} then simplifies greatly, and actually factors into the product of $(s v^2 - (s-v) u^2)^2$ by the cubic 
\begin{align*}
4 s^3 v^4 + 4 s^2 (9 s + 2 v) u^2 v^2 + 
s (32 s^2 + 36 s v + v^2) u^4 -3 (s -v) u^6.
\end{align*}
So, it has a double root $u^2v/(u^2-v^2)$ (negative if $u<v$ and disappearing if $u=v$),  and three simple roots 
\[ -\frac{u^2v}{u^2+v^2},\quad  \frac{u^2(-v\pm 2\sqrt{6u^2+v^2})}{2(8u^2+v^2)}, \]
exactly one of which being positive. This justifies the second line of \eqref{abs} when $u\leq v$, and its particular case \eqref{abs1} corresponding to $u=v$. 

It remains to consider the case $u>v$ and show that the matrix 
\[ C= \begin{bmatrix} 1 & \frac{u^2-v^2}{uv} &  -\frac{u^2-v^2}{u^2}\\ 0 & 1 & \frac{u^2-v^2}{uv}\\
	0 & 0 & 1 \end{bmatrix}, \]
obtained from \eqref{B1} by plugging in $\zeta$ from the first line of \eqref{abs} and replacing $w$ with $u$,
satisfies $W_0(C)\ni 0$ if and only if $\frac{v^2}{u^2}\leq 7-4\sqrt{3}$. 

Direct computations show that $C^*C$ has the maximal eigenvalue $u^2/v^2$ of multiplicity two, and the respective eigenspace $\mathcal L$ is the span of 
\[ \xi_1=\begin{bmatrix} -v^2/u^2 \\ 0 \\ 1\end{bmatrix} \text{ and } \xi_2=\begin{bmatrix} v/u \\ 1 \\ 0\end{bmatrix}. \]
Since $\scal{C\xi_2,\xi_2}=2\neq 0$, we need to figure out whether \eq{scC} \scal{C(\xi_1+t\xi_2),\xi_1+t\xi_2}=0\en  for some $t\in\C$. But \[ \scal{C(\xi_1+t\xi_2),\xi_1+t\xi_2}=2\abs{t}^2+1+\frac{v^2}{u^2}-2\re t\frac{v}{u}+\overline{t}\frac{u^2-v^2}{uv}. \] So, in order for \eqref{scC} to hold $t$ must be real and satisfy 
\eq{t} 2t^2+t\left(\frac{u}{v}-3\frac{v}{u}\right)+\left(1+\frac{v^2}{u^2}\right)=0. \en 
Such $t$ exists if and only if the discriminant of the quadratic equation \eqref{t} is non-negative, which amount to 
\[ \frac{v^2}{u^2}-14+\frac{u^2}{v^2}\geq 0,  \]
or, equivalently,  
\[ \frac{v^2}{u^2}\in [0,7-4\sqrt{3}]\cup [7+4\sqrt{3},+\infty). \]
Since $u>v>0$, only the first interval in the right hand side is of relevance. 
\end{proof} 
It is worth mentioning that according to \eqref{abs} $\zeta$ is indeed the smallest positive root of the polynomial \eqref{s} in the case $\abs{x}=\abs{z}$. Here is a numerical example.

\smallskip
{\bf Example 3.} 
Consider the matrix $A$ as follows, 
\begin{align*}
A = \begin{bmatrix}
\lambda & 4  & - 2 \\ 0 & \lambda & 4 \\ 0 & 0 & \lambda
\end{bmatrix}.
\end{align*}
Since $\frac{v}{u} = \frac{1}{2} > \sqrt{7 - 4 \sqrt{3}}$, by \eqref{abs} we have that 
\begin{align*}
    \abs{\zeta} = \frac{4^2(2\sqrt{6 \times 4^2 + 2^2} - 2)}{2(8 \times 4^2 + 2^2)} = \frac{12}{11}
\end{align*}
and $\arg\zeta = \arg(-2) = \pi$ which implies that $\zeta = -\frac{12}{11}$. Hence, $St(A) = \lambda - \frac{12}{11}$.

\section{On 3-by-3 matrices with a two-point spectrum} \label{s:tps}

A 3-by-3 matrix $A$ with a multiple eigenvalue is unitarily similar to 
\eq{me} \begin{bmatrix} \mu & x & y \\ 0 & \mu & z \\ 0 & 0 & \lambda \end{bmatrix}. \en 
This class of matrices is intermediate between \eqref{3by3} and the whole $\C^{3\times 3}$. As such, it does not 
admit (to the best of our knowledge) an explicit procedure for computing $\St(A)$ similar to Theorem~\ref{th:comp}, but still allows for an extension of Proposition~\ref{th:3by30}, delivering the criterion for $\St(A)$ to coincide with the multiple eigenvalue of $A$. We will use the same notation \eqref{uvw} as in Theorem~\ref{th:comp}, in addition abbreviating  $\abs{\lambda-\mu}$ to $\rho$. 
\begin{thm}\label{th:lmm}A matrix $A$ unitary similar to \eqref{me} has $\St(A)$ coinciding with its repeated eigenvalue $\mu$ if and only if the following three conditions hold:  \eq{arglm} \overline{x}y\overline{z}(\lambda-\mu)\leq 0, \en 
\eq{inelm} \abs{\rho v-uw}\leq\sqrt{(u^2-\rho^2)(w^2+\rho^2)}, \en
and \eq{eqlm} vw\rho^3+uv^2\rho^2+vw(v^2+w^2-u^2)\rho-uv^2w^2=0. \en 
\end{thm}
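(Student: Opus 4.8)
The plan is to apply the characterization \eqref{stw0}: $\St(A)=\mu$ if and only if $0\in W_0(A-\mu I)$. Writing $\delta=\lambda-\mu$ (so $\rho=\abs{\delta}$) and using the notation \eqref{uvw}, I would first compute
\[ (A-\mu I)^*(A-\mu I)=\begin{bmatrix} 0 & 0 & 0 \\ 0 & u^2 & \overline{x}y \\ 0 & x\overline{y} & v^2+w^2+\rho^2 \end{bmatrix}. \]
This has the same shape as the matrix appearing in the proof of Proposition~\ref{th:3by30}, except that the $(3,3)$ entry carries the extra summand $\rho^2$. In particular $e_1$ lies in its kernel, and $\norm{A-\mu I}^2$ equals the larger eigenvalue $\sigma$ of the $2$-by-$2$ block $K=\begin{bmatrix} u^2 & \overline{x}y \\ x\overline{y} & v^2+w^2+\rho^2\end{bmatrix}$, whose characteristic polynomial is $s^2-(u^2+v^2+w^2+\rho^2)s+u^2(w^2+\rho^2)$ (here $\det K=u^2(w^2+\rho^2)$ because $\abs{\overline{x}y}^2=u^2v^2$).

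Second, I would treat the main case $xyz\neq 0$, where $K$ has a nonzero off-diagonal entry, so $\sigma$ is a simple eigenvalue and (by the description of $W_0$ recalled in the Introduction) $W_0(A-\mu I)$ is the single point $\scal{(A-\mu I)\xi,\xi}$ for the unique maximizing unit vector $\xi=[0,\xi_2,\xi_3]^T$. From $K$ one gets $\xi_2=\frac{\overline{x}y}{\sigma-u^2}\xi_3$, with $\xi_3\neq 0$ since $x\overline{y}\neq 0$, and a direct computation yields
\[ \scal{(A-\mu I)\xi,\xi}=\abs{\xi_3}^2\left(\frac{x\overline{y}z}{\sigma-u^2}+(\lambda-\mu)\right). \]
Thus $0\in W_0(A-\mu I)$ becomes the single complex equation $\frac{x\overline{y}z}{\sigma-u^2}=-(\lambda-\mu)$. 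Splitting into argument and modulus (note $\sigma-u^2>0$) gives, respectively, the collinearity $\arg(x\overline{y}z)=\arg(-(\lambda-\mu))$, which is exactly \eqref{arglm}, and the relation $\sigma-u^2=uvw/\rho$.

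Third, I would convert the modulus relation $\sigma=u^2+uvw/\rho$ into \eqref{eqlm} together with \eqref{inelm}. Substituting $s=u^2+uvw/\rho$ into the characteristic polynomial of $K$, clearing denominators, and multiplying by $-\rho^2/u$ produces precisely the left-hand side of \eqref{eqlm}; hence \eqref{eqlm} says exactly that $u^2+uvw/\rho$ is an eigenvalue of $K$. That it is the \emph{larger} eigenvalue is equivalent to $u^2+uvw/\rho\geq\tfrac12\operatorname{tr}K=\tfrac12(u^2+v^2+w^2+\rho^2)$, i.e. $2uvw/\rho\geq v^2+w^2+\rho^2-u^2$; multiplying by $\rho$ and rearranging shows this is the same as $(\rho v-uw)^2\leq(u^2-\rho^2)(w^2+\rho^2)$, which is \eqref{inelm}. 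Conversely, if \eqref{arglm}, \eqref{inelm}, \eqref{eqlm} all hold, then $\sigma=u^2+uvw/\rho$ is the maximal eigenvalue of $K$, the moduli match ($\abs{x\overline{y}z}=uvw=\rho(\sigma-u^2)$), and \eqref{arglm} aligns the phases, forcing $\frac{x\overline{y}z}{\sigma-u^2}+(\lambda-\mu)=0$; hence $0\in W_0(A-\mu I)$. This closes the equivalence for $xyz\neq 0$.

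The main obstacle is the degenerate situation $xyz=0$, where $K$ may have a repeated maximal eigenvalue and $W_0(A-\mu I)$ becomes a segment or disk rather than a single point, so the one-vector computation above no longer applies verbatim. The subcases $u=0$, $v=0$, $w=0$ (with the boundary $u^2=w^2+\rho^2$ being the genuinely two-dimensional one) must be examined directly; one checks that in each of them \eqref{arglm}--\eqref{eqlm} either hold trivially or reduce to the stated inequalities, and that this agrees with whether $0$ lies in the numerical range of the relevant compression. A secondary point deserving care is confirming that the passage from the characteristic polynomial to \eqref{eqlm} is an exact equivalence: the discarded factor $-\rho^2/u$ never vanishes under the standing assumptions $\rho>0$ and $x\neq 0$, so \eqref{eqlm} is truly equivalent to the eigenvalue condition rather than merely implied by it.
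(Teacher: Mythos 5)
Your proposal is correct and takes essentially the same route as the paper's proof: both rest on \eqref{stw0} and the rank-two structure of $(A-\mu I)^*(A-\mu I)$, with \eqref{eqlm} arising as the condition that $u^2+uvw/\rho$ be an eigenvalue of the nontrivial $2$-by-$2$ block and \eqref{inelm} as the condition that it be the \emph{larger} one (the paper, after normalizing to $\lambda=1$, $\mu=0$, $x,z\geq 0$ so that \eqref{arglm} reads $y\leq 0$, phrases this as the other nonzero eigenvalue $\tau=1+y^2+z^2+xyz$ not exceeding $\sigma$, which matches your comparison of $u^2+uvw/\rho$ with half the trace). The degenerate situations you defer --- $\rho=0$, disposed of via Proposition~\ref{th:3by30}, and $x=0$ or $y=0$ --- are precisely the paper's opening reduction and its Cases 1--2, and your sketch of them (including the genuinely two-dimensional eigenspace when $v=0$ and $u^2=w^2+\rho^2$) is accurate, so nothing essential is missing.
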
 
Note that condition \eqref{inelm} implicitly contains the requirement $u\geq\rho$. 
\begin{proof}  If $\lambda=\mu$, then $\rho=0$ and conditions \eqref{arglm}, \eqref{inelm} hold automatically, while \eqref{eqlm} boils down to $xyz=0$. This is in full agreement with Proposition~\ref{th:3by30}, so we need only to deal with $\lambda\neq\mu$. 

Replacing $A$ of the form \eqref{me} by 
\[ \frac{1}{\lambda-\mu}A-\frac{\mu}{\lambda-\mu}I=\begin{bmatrix} 0 & x/(\lambda-\mu) & y/(\lambda-\mu) 
\\ 0 & 0 & z/(\lambda-\mu) \\ 0 & 0 & 1 \end{bmatrix},\] 
we may without loss of generality suppose that in \eqref{me} $\lambda=1$ and $\mu=0$. In other words, it suffices to prove the statement for matrices
\eq{me1} A= \begin{bmatrix} 0 & x & y \\ 0 & 0 & z \\ 0 & 0 & 1 \end{bmatrix}, \en 
where in addition we may (as was done earlier) assume $x,z\geq 0$. Respectively, 
conditions \eqref{arglm}--\eqref{eqlm} for the purpose of this proof should be replaced by 
\eq{inelm1} x\geq 1, \ y\leq 0, \ \abs{xz+y}\leq\sqrt{(x^2-1)(z^2+1)} \en 
 and \eq{eqlm1} yz-xy^2+yz(y^2+z^2-x^2)+xy^2z^2=0. \en 

According to \eqref{me1}, \eq{a*a1} A^*A=[0]\oplus \begin{bmatrix} x^2 & xy \\ x\overline{y} & \abs{y}^2+z^2+1 \end{bmatrix}. \en
Eigenvectors $\xi$ corresponding to the maximal eigenvalue $\sigma$ of $A^*A$ therefore have the first coordinate equal to zero. Consequently, \eq{sca} A\xi = [x\xi_2+y\xi_3, z\xi_3, \xi_3]^T, \text{ and } \scal{A\xi,\xi}=\xi_3(z\overline{\xi_2}+\overline{\xi_3}). \en

{\sl Case 1.} $x=0$. From \eqref{a*a1} we see that $\xi$ is collinear with $e_3$, and 
\eqref{sca} implies $\scal{A\xi,\xi}=\abs{\xi_3}^2\neq 0$. So, $\St(A)\neq 0$. Since condition \eqref{inelm1} fails, the statement holds.  

{\sl Case 2.} $y=0$. Then condition \eqref{eqlm1} holds while \eqref{inelm1} simplifies to $x^2\geq 1+z^2$. On the other hand, in this case $A^*A=\diag[0,x^2,z^2+1]$. So, $\xi$ is collinear to $e_3$ if $x^2 < 1+z^2$ while $\xi=e_2$ is admissible otherwise. By \eqref{sca}, $W_0(A)\ni 0$ is equivalent to $x^2\geq 1+z^2$. Once again, the statement holds. 

It remains to consider 

{\sl Case 3.} $x,y\neq 0$. The eigenvalue $\sigma$ of $A^*A$ is then simple, and the entries $\xi_2,\xi_3$ of the respective eigenvector $\xi$ are different from zero. By scaling, without loss of generality let $\xi_2=1$. According to \eqref{sca}, condition $\scal{A\xi,\xi}=0$ then holds if and only if $\xi_3=-z$. Rewritten entry-wise, $A^*A\xi=\sigma\xi$ takes the form
\eq{ev} x^2-xyz=\sigma, \quad x\overline{y}-(\abs{y}^2+z^2+1)z=-\sigma z. \en 

Since $\sigma>\abs{y}^2+z^2+1$, the second equality in \eqref{ev} implies that $y<0$, and \eqref{eqlm1} follows. Finally, $\tau=\tr(A^*A)-\sigma= 1+y^2+z^2+xyz$ is the second non-zero eigenvalue of $A^*A$, and the upper bound on $\abs{xz+y}$ in \eqref{inelm1} is necessary and sufficient for $\tau$ not to exceed $\sigma$.\end{proof} 

\section{Roberts orthogonality} \label{s:rorth}

Two vectors $x$ and $y$ of a normed linear space $X$ are called {\em Roberts orthogonal} (denoted: $x\perp_R y$) if for all scalars $\nu$: \[ \norm{x+\nu y}=\norm{x-\nu y}. \]
If $X$ is an inner product space, Roberts orthogonality and usual orthogonality coincide: $x\perp_R y$ if and only if $\scal{x,y}=0$. While this notion goes back to \cite{Rob34}, a more recent treatment of the case when $X$ is a unital $C^*$-algebra can be found in \cite{ArBeRa}. In particular, \cite[Theorem 2.4]{ArBeRa} delivers the criterion for $A\in B(\mathcal H)$ to be Roberts orthogonal to the identity operator $I$. To describe this result, recall that the {\em Davis-Wielandt shell} of $A$ is the subset of $\C\times\R$ (identified with $\R^3)$ defined by \[ DW(A)=\{ (\scal{Ax,x}, \norm{Ax}^2)\colon x\in\mathcal H, \norm{x}=1\}.\]
This set is a (possibly, degenerate) ellipsoid if $\dim\mathcal H=2$, and a convex body otherwise, closed if $\dim\mathcal H<\infty$, and located between the horizontal planes $\Pi_0=\C\times\{0\}$ and $\Pi_1=\C\times\{\norm{A}^2\}$. 

For each vertical line having a non-empty intersection with $\cl({DW(A)})$ choose the uppermost point of this intersection, and call the union $DW_{ub}(A)$ of all such points the {\em upper boundary} of $DW(A)$. 
It was shown in \cite{ArBeRa} that $A\perp_R I$ if and only if $DW_{ub}(A)=DW_{ub}(-A)$; in other words, if and only if $DW_{ub}(A)$ is symmetric with respect to the vertical coordinate axis. 

Note however that $\cl{W(A)}$ and $W_0(A)$ are the projections onto $\Pi_0$ of $DW_{ub}(A)$ and its intersection with $\Pi_1$, respectively. So, from $DW_{ub}(A)=DW_{ub}(-A)$ it immediately follows that both $\cl{W(A)}$ and $W_0(A)$ are central symmetric. The necessity of the former condition for $A$ and $I$  to be Roberts orthogonal (along with its sufficiency when $\dim\mathcal H=2$) was observed in \cite{ArBeRa}. From the necessity of the latter and \eqref{stw0} we obtain 
\begin{prop}\label{pr:rost}Let $A\in B(\mathcal H)$ be Roberts orthogonal to $I$. Then $\St(A)=0$.  \end{prop}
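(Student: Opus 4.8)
The plan is to deduce the conclusion directly from the two facts already assembled in the paragraph immediately preceding the statement. The essential input is the criterion from \cite{ArBeRa}: $A\perp_R I$ if and only if the upper boundary $DW_{ub}(A)$ of the Davis--Wielandt shell is symmetric with respect to the vertical coordinate axis, i.e. $DW_{ub}(A)=DW_{ub}(-A)$. The second ingredient is the characterization \eqref{stw0}, namely that $\St(A)=\lambda$ if and only if $0\in W_0(A-\lambda I)$; specialized to $\lambda=0$ this reads $\St(A)=0$ if and only if $0\in W_0(A)$. So the entire task reduces to extracting the statement $0\in W_0(A)$ from the symmetry of $DW_{ub}(A)$.

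First I would recall the geometric dictionary spelled out just above the proposition: $W_0(A)$ is the projection onto the plane $\Pi_0=\C\times\{0\}$ of the intersection $DW_{ub}(A)\cap\Pi_1$, where $\Pi_1=\C\times\{\norm{A}^2\}$. The key observation is that $\norm{A}=\norm{-A}$, so the top plane $\Pi_1$ is the same for $A$ and $-A$, and the projection of $DW_{ub}(-A)\cap\Pi_1$ onto $\Pi_0$ is $W_0(-A)=-W_0(A)$. From the Roberts-orthogonality hypothesis $DW_{ub}(A)=DW_{ub}(-A)$, intersecting both sides with $\Pi_1$ and projecting onto $\Pi_0$ yields $W_0(A)=-W_0(A)$; that is, $W_0(A)$ is centrally symmetric about the origin.

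Next I would invoke that $W_0(A)$ is a nonempty closed convex set (this is Stampfli's observation, recorded in the introduction). A nonempty convex set that is symmetric about the origin must contain its own center of symmetry, hence $0\in W_0(A)$. For the finite-dimensional case this is immediate since $W_0(A)$ is compact; in general the centre of a nonempty centrally symmetric convex set still lies in the set by taking the midpoint of any point and its reflection, which equals $0$. Applying \eqref{stw0} with $\lambda=0$ then gives $\St(A)=0$, completing the argument.

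The step I expect to require the most care is making the projection/intersection bookkeeping precise, specifically confirming that projecting the symmetry $DW_{ub}(A)=DW_{ub}(-A)$ down through $\Pi_1$ really delivers $W_0(A)=-W_0(A)$ rather than merely a symmetry of $\cl{W(A)}$. The paper flags exactly this subtlety by noting that ``the maximal numerical range does not behave nicely under shifts,'' so one must be careful to use $-A$ (a rotation, under which $W_0$ does transform as $W_0(-A)=-W_0(A)$) and not a shift. Since $\norm{-A}=\norm{A}$, the maximizing vectors for $A$ and $-A$ coincide, and the limits $\scal{Ax_n,x_n}\to z$ correspond bijectively to $\scal{-Ax_n,x_n}\to -z$, which makes the identity $W_0(-A)=-W_0(A)$ rigorous; everything else is routine once this is in place.
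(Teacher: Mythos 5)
Your proof is correct and follows essentially the same route as the paper, which likewise reads off the central symmetry $W_0(A)=-W_0(A)$ from $DW_{ub}(A)=DW_{ub}(-A)$ via the projection of $DW_{ub}(A)\cap\Pi_1$ onto $\Pi_0$, and then concludes $0\in W_0(A)$ from convexity and $\St(A)=0$ from \eqref{stw0}. Your explicit verification that $W_0(-A)=-W_0(A)$ (rotation, not shift) merely spells out a step the paper leaves implicit.
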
 
Indeed, the set $W_0(A)$ is convex, and its central symmetry implies therefore that it contains zero. 

\begin{thm}\label{th:rq}A quadratic operator $A$ is Roberts orthogonal to the identity if and only if it is nilpotent or a scalar multiple of an involution. \end{thm}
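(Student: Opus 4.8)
The plan is to characterize when a quadratic operator $A$ is Roberts orthogonal to $I$ by exploiting the structure given by Theorem~\ref{th:quad} and its proof. Recall that a quadratic $A$ satisfies $A^2+pA+qI=0$ and, by \cite{TsoWu}, is unitarily equivalent (in norm behaviour under shifts) to the $2\times 2$ matrix $A_0=\left[\begin{smallmatrix}\lambda_1 & \norm{Z}\\ 0 & \lambda_2\end{smallmatrix}\right]$, where $\lambda_1,\lambda_2$ are the roots of $t^2+pt+q$. Since Proposition~\ref{pr:rost} gives $\St(A)=0$ as a necessary condition for $A\perp_R I$, and since Theorem~\ref{th:quad} gives $\St(A)=-p/2=(\lambda_1+\lambda_2)/2$, the very first step is to observe that $A\perp_R I$ forces $\lambda_1+\lambda_2=0$, i.e. $\lambda_2=-\lambda_1$.

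Next I would reduce the problem to the finite-dimensional $2\times 2$ model. Because Roberts orthogonality $A\perp_R I$ is the condition $\norm{A+\nu I}=\norm{A-\nu I}$ for all scalars $\nu$, and because $\norm{A-\lambda I}=\norm{A_0-\lambda I}$ for every $\lambda\in\C$ (as established in the proof of Theorem~\ref{th:quad}), the orthogonality of $A$ to $I$ is equivalent to that of the $2\times 2$ matrix $A_0$ to $I_2$. For $\dim\mathcal H=2$, however, the criterion from \cite{ArBeRa} is especially clean: $A_0\perp_R I$ if and only if $\cl{W(A_0)}$ is centrally symmetric (the Davis–Wielandt shell is then an ellipsoid, and symmetry of its upper boundary is equivalent to symmetry of the numerical range). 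So the task becomes: determine when the elliptical disk $W(A_0)$, which by the Elliptical Range Theorem has foci $\lambda_1,-\lambda_1$ and minor axis governed by $\norm{Z}$, is symmetric about the origin.

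The geometric analysis is then routine. With $\lambda_2=-\lambda_1$ the foci are already symmetric about $0$, and the ellipse $W(A_0)$ is automatically centered at $(\lambda_1+\lambda_2)/2=0$; one checks that such an ellipse is always centrally symmetric about its own center, so the numerical range condition is satisfied as soon as $\lambda_1+\lambda_2=0$. Thus $A\perp_R I$ holds precisely when $\lambda_2=-\lambda_1$. I would then translate this eigenvalue condition back into the two stated structural descriptions: if $\lambda_1=0$ (hence $\lambda_2=0$), then $A_0$ is nilpotent and so is $A$; if $\lambda_1\neq 0$, then $\sigma(A)=\{\lambda_1,-\lambda_1\}$ means $A/\lambda_1$ has spectrum $\{1,-1\}$ and satisfies $(A/\lambda_1)^2=I$, i.e. $A$ is a scalar multiple of an involution. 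Conversely, both a nilpotent quadratic and a scalar multiple of an involution have $\lambda_1+\lambda_2=0$, so the symmetry holds and $A\perp_R I$.

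The main obstacle I anticipate is justifying the equivalence between $A\perp_R I$ and $A_0\perp_R I_2$ rigorously, since Roberts orthogonality is defined via equality of norms of all shifts $A\pm\nu I$, and I must confirm that the identity $\norm{A-\lambda I}=\norm{A_0-\lambda I}$ from Theorem~\ref{th:quad} indeed holds for every complex $\lambda$ (not merely real ones) so that both families of shifted norms coincide. Once that reduction is secure, the only remaining subtlety is verifying that for a $2\times 2$ matrix central symmetry of $\cl{W(A_0)}$ is genuinely equivalent to Roberts orthogonality — this is exactly the sufficiency-in-dimension-two statement from \cite{ArBeRa} noted in the paragraph preceding Proposition~\ref{pr:rost}, so I would simply invoke it rather than reprove it. The eigenvalue computation and the passage to the two structural forms are then elementary.
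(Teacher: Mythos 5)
Your proposal is correct, and its necessity half coincides with the paper's: both combine Proposition~\ref{pr:rost} with Theorem~\ref{th:quad} to force $p=0$ (equivalently $\lambda_2=-\lambda_1$), and then read off the two structural forms ($q=0$ giving nilpotency, $q\neq 0$ giving a scalar multiple of an involution). Where you genuinely diverge is sufficiency. The paper argues by direct computation: using the identity $\norm{A-\nu I}=\norm{A_0-\nu I}$ from the proof of Theorem~\ref{th:quad} --- which is stated there for \emph{all} $\nu\in\C$, so the obstacle you flag is already settled --- it applies formula \eqref{2max} to $A_0$ with $\lambda_1=-\lambda_2=a$ and obtains an explicit expression for $\norm{A-\nu I}^2$ that depends on $\nu$ only through $\abs{\nu}^2$ and $\abs{a^2-\nu^2}^2$, hence is visibly invariant under $\nu\mapsto-\nu$. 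You instead make the same reduction to $A_0$ but then invoke \cite[Proposition~2.7]{ArBeRa}: in dimension two, Roberts orthogonality to $I$ is equivalent to central symmetry of $W(A_0)$, and by the Elliptical Range Theorem the elliptical disk with foci $\pm\lambda_1$ is centered at $0$ and thus symmetric under $z\mapsto -z$ (including the degenerate case $Z=0$, where it is a segment). This is legitimate and non-circular, since the paper itself records the two-dimensional sufficiency of central symmetry, quoting \cite{ArBeRa}, just before Proposition~\ref{pr:rost}. The trade-off: the paper's route is self-contained (reusing only its own norm formula) and produces the shifted norms explicitly, while yours is shorter and more conceptual; note, however, that the paper's closing remark derives the central-symmetry equivalence for quadratic operators as a \emph{by-product} of its computational proof, whereas in your argument that equivalence (in its $n=2$ form) enters as an input rather than emerging as an output.
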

\begin{proof}{\sl Necessity.} Combining Proposition~\ref{pr:rost} with Theorem~\ref{th:quad} we see that $A$ satisfies \eqref{quad} with $p=0$. If in addition $q=0$, then $A$ is nilpotent; otherwise $(-q)^{-1/2}A$ is an involution.

{\sl Sufficiency.} The operator in question is unitarily similar to \eqref{mquad} with $\lambda_1=-\lambda_2:=a$. The observation $\norm{A-\lambda I}=\norm{A_0-\lambda I}$ from the proof of Theorem~\ref{th:quad} followed by the application of \eqref{2max} to $A_0$ in place of $A$ show that in our case 
\[ \norm{A-\lambda I}^2=\abs{a}^2+\abs{\lambda}^2+\norm{Z}^2/2
+\sqrt{(\abs{a}^2+\abs{\lambda}^2+\norm{Z}^2/2)^2-\abs{a^2-\lambda^2}^2}. \]  
The right hand side is indeed invariant under the change $\lambda\mapsto -\lambda$, and so $A\perp_R I$. \end{proof} 
So, for quadratic operators $A$ condition $\St(A)=0$ is not only necessary but also sufficient for $A\perp_R I$. As a by-product we see that the central symmetry of $W(A)$ is yet another equivalent condition; the fact for $n=2$ observed in \cite[Proposition~2.7]{ArBeRa}

Moving to $A\in\C^{3\times 3}$, we restrict our attention to matrices with circular numerical ranges.

\begin{thm}\label{th:r3} A matrix $A\in\C^{3\times 3}$ with a circular numerical range is Roberts orthogonal to the identity if and only if it is nilpotent or unitarily similar to a direct sum of a $1$-by-$1$ and a nilpotent $2$-by-$2$ block.  \end{thm}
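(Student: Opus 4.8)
The plan is to reduce the statement to a careful analysis of when the Davis--Wielandt shell criterion $DW_{ub}(A)=DW_{ub}(-A)$ can hold, using the hypothesis that $W(A)$ is a disk (or degenerates to one). A $3$-by-$3$ matrix with circular numerical range is, up to unitary similarity, quite constrained: by known classifications (e.g.\ via the boundary-generating curve, or Kippenhahn's work) $W(A)$ is a circular disk precisely when $A$ is unitarily similar to a direct sum involving at most a nilpotent-type block together with a central eigenvalue, or $A$ itself is (after a shift) a suitable Jordan/weighted-shift structure. First I would invoke \emph{necessity} from Proposition~\ref{pr:rost}: $A\perp_R I$ forces $\St(A)=0$, and more strongly the symmetry of $DW_{ub}(A)$ forces $\cl{W(A)}$ to be centrally symmetric about $0$. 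Since $W(A)$ is assumed circular, central symmetry pins its center at $0$, so the disk is centered at the origin.

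Next I would use the standing assumption to put $A$ in a normal form. After the shift making $W(A)$ centered at $0$, I would argue that the center of a circular numerical range for a $3$-by-$3$ matrix must be an eigenvalue, and in fact one can write $A$ (up to unitary similarity) in the block form \eqref{me} with the repeated structure, or as a $2$-by-$2$ plus a $1$-by-$1$ summand. The two alternatives in the statement correspond exactly to the two ways this can happen: either all three eigenvalues coincide (so $A-\text{(eigenvalue)}I$ is nilpotent, giving the ``nilpotent'' case once the center is $0$), or $A$ splits as a scalar (the central eigenvalue) direct summed with a $2$-by-$2$ block whose own numerical range is a concentric disk — and a $2$-by-$2$ disk-generating block centered at $0$ is exactly a nilpotent $2$-by-$2$ matrix (its eigenvalues must be symmetric about the center, i.e.\ $\pm a$, but circularity of the ellipse forces $a=0$). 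Establishing that the $1$-by-$1$ eigenvalue sits at the center while the $2$-by-$2$ block is nilpotent is the crux of the ``only if'' direction.

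For the \emph{sufficiency} direction I would treat the two listed cases directly. If $A$ is nilpotent, then $W(A)$ is a disk centered at $0$ and I can verify $A\perp_R I$ either by exhibiting the symmetry $DW_{ub}(A)=DW_{ub}(-A)$ or, more concretely, by a unitary $U$ with $U^*AU=-A$ (for instance $U=\diag(1,-1,1)$ on a suitably normalized Jordan form, or its weighted-shift analogue), which immediately gives $\norm{A+\nu I}=\norm{-A+\nu I}=\norm{A-\nu I}$ for all scalars $\nu$. The same unitary-equivalence-to-its-negative argument handles the direct sum of a $1$-by-$1$ block equal to $0$ (the central eigenvalue, which after shifting is $0$) with a nilpotent $2$-by-$2$ block, since both summands are individually unitarily similar to their negatives and the $1$-by-$1$ block contributes nothing to the norm computation that is not symmetric in $\nu$.

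The step I expect to be the main obstacle is the necessity argument's geometric core: showing that circularity of $W(A)$ together with central symmetry forces precisely these two rigid normal forms, ruling out any ``genuinely three-dimensional'' disk-generating matrix with a nontrivial coupling between the central eigenvalue and the rest. Concretely, the difficulty is in proving that once $\cl{W(A)}$ is a disk centered at $0$, the $1$-by-$1$ summand (if the matrix reduces) must carry the eigenvalue $0$ rather than a nonzero value on the boundary, and that no irreducible $3$-by-$3$ matrix has a circular numerical range compatible with the full symmetry $DW_{ub}(A)=DW_{ub}(-A)$ beyond the nilpotent case. I would handle this by combining the reducibility/classification results for circular $W(A)$ in dimension three with the sharper symmetry of the \emph{upper boundary} of the Davis--Wielandt shell (not merely of $W(A)$), since the latter carries the additional radial information $\norm{Ax}^2$ that distinguishes a nilpotent block from a scalar-multiple-of-involution block.
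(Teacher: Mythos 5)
Your necessity argument has a genuine gap at exactly the point you yourself flag as ``the main obstacle,'' and it cannot be closed by circularity plus central symmetry alone. The dichotomy you assert---that a $3$-by-$3$ matrix whose numerical range is a disk centered at $0$ is either nilpotent or splits off a scalar block---is false as a statement about the geometry: the matrix $\left[\begin{smallmatrix} 0&1&1\\ 0&0&1\\ 0&0&-1/2 \end{smallmatrix}\right]$, discussed at the end of Section~\ref{s:rorth}, is unitarily irreducible and not nilpotent, yet its numerical range is a circular disk centered at the origin (in particular, centrally symmetric). So the classification you invoke only delivers the normal form \eqref{Acd} subject to \eqref{c1}, \eqref{c2}; it does not exclude a genuinely three-dimensional coupling with $\lambda\neq 0$ and $xyz\neq 0$. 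The paper eliminates that case by hard quantitative input your sketch lacks: Proposition~\ref{pr:rost} converts $A\perp_R I$ into $\St(A)=0$, and since $0$ is the repeated eigenvalue in \eqref{Acd}, the criterion \eqref{eqlm} of Theorem~\ref{th:lmm} applies; combined with \eqref{c1} (i.e.\ $uvw=\rho(v^2+w^2)$) it forces $\lambda=0$ or $z=0$, and then $y=0$. Your proposed substitute---``the sharper symmetry of $DW_{ub}(A)$''---names the right kind of extra information but is not an argument; without an analogue of \eqref{eqlm}, necessity is unproven.

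You also mishandle the reducible alternative, in both directions. The theorem does \emph{not} place the $1$-by-$1$ summand at the center of the disk: sufficiency must cover $A=[\lambda]\oplus B$ with $B$ a nilpotent $2$-by-$2$ block and \emph{any} $\lambda$ with $2\abs{\lambda}\leq\norm{B}$ (which is what circularity of $W(A)$ requires), and for $\lambda\neq 0$ such $A$ is Roberts orthogonal to $I$ even though it is \emph{not} unitarily similar to $-A$ (the spectra $\{\lambda,0,0\}$ and $\{-\lambda,0,0\}$ differ). Hence your sole sufficiency mechanism, a unitary with $U^*AU=-A$, cannot handle this family, while your necessity step would wrongly conclude that the scalar block equals $0$. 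The paper instead proves $\abs{\lambda-\nu}\leq\norm{B-\nu I}$ for all $\nu$ directly from $2\abs{\lambda}\leq\abs{x}$, so that $\norm{A-\nu I}=\norm{B-\nu I}$ depends only on $\abs{\nu}$. Finally, even in the nilpotent case your diagonal sign flip $U=\diag(1,-1,1)$ presupposes that one of the entries $x,y,z$ in \eqref{Acd} vanishes: if $xyz\neq 0$, the three requirements $\bar d_1 d_2=\bar d_1 d_3=\bar d_2 d_3=-1$ are inconsistent, and indeed such nilpotent matrices have $\St(A)\neq 0$ by Proposition~\ref{th:3by30}, hence $A\not\perp_R I$. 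It is circularity, via \eqref{c1} with $\lambda=0$, that yields $x\overline{y}z=0$ and rescues your computation; this omission is easily repaired, but the two gaps above are substantive.
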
 
\begin{proof}
{\sl Necessity.} Let  $A\perp_R I$ while $W(A)$ is a circular disk. The central symmetry of $W(A)$ means that this disk is centered at the origin. According to \cite[Corollary 2.5]{KRS} (see also \cite{CT94}), such $A$ is unitarily similar to \eq{Acd} \begin{bmatrix} 0 & x & y \\ 0 & 0 & z \\ 0 & 0 &  \lambda \end{bmatrix}, \en 
where \eq{c1} x\overline{y}z=-\lambda(\abs{y}^2+\abs{z}^2)\en   and
\eq{c2} \abs{x}^2+\abs{y}^2+\abs{z}^2\geq 4\abs{\lambda}^2. \en 
In the notation of Theorem~\ref{th:lmm} condition \eqref{c1} implies $uvw=\rho(v^2+w^2)$. Plugging this into \eqref{eqlm} (which holds, since $\St(A)=\mu=0$) we conclude that $\lambda=0$ or $z=0$. If $\lambda=0$, the matrix \eqref{Acd} is nilpotent. Otherwise, \eqref{c1} implies that along with $z$ also $y=0$, and $A$ is therefore unitarily similar to \eq{Ar} [\lambda]\oplus B, \text{ where } B=\begin{bmatrix} 0 & x \\ 0 & 0\end{bmatrix}. \en  

{\sl Sufficiency.} If $A$ is nilpotent, the circularity of its numerical range implies that in \eqref{Acd} not only $\lambda=0$, but also $xyz=0$, due to \eqref{c1}. For any $\nu\in\C$ the matrix $A-\nu I$ by a rotation through $-\arg\nu$ and an appropriate diagonal unitary similarity can be reduced to 
\[ \begin{bmatrix} \abs{\nu} & \abs{x} & \abs{y} \\ 0 & \abs{\nu} & \abs{z}\\ 0 & 0 & \abs{\nu} \end{bmatrix}. \] 
So, $\norm{A-\nu I}$ in this case does not depend on the argument of $\mu$, only on its absolute value. In particular, $A\perp_R I$. 

Finally, if $A$ is unitarily similar to \eqref{Ar}, the circularity of $W(A)$ implies that $2\abs{\lambda}\leq\abs{x}$.
From here, 
\[ \norm{B-\nu I}^2=\abs{\nu}^2+\frac{{\abs{x}}^2}{2}+\sqrt{\abs{x\nu}^2+\frac{\abs{x}^4}{4}}\geq 
\abs{\nu}^2+2{\abs{\lambda}^2+2\abs{\lambda}\sqrt{\abs{\nu}^2+\frac{\abs{\lambda}^2}{4}}}, \]
so $\abs{\lambda-\nu}\leq\norm{B-\nu I}$ for all $\nu\in\C$. 
Consequently, \[ \norm{A-\nu I}=\max\{\abs{\lambda-\nu}, \norm{B-\nu I}\}= \norm{B-\nu I},\] 
and the relation $A\perp_R I$ follows from  $B\perp_R I$.
\end{proof} 
\begin{cor}\label{co:cst}Let $A\in\C^{3\times 3}$ have a circular numerical range. Then $A\perp_RI$ if and only if the disk $W(A)$ is centered at zero and $\St(A)=0$.  \end{cor}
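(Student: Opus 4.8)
The goal is to prove Corollary~\ref{co:cst}: for $A \in \C^{3\times 3}$ with circular numerical range, $A \perp_R I$ if and only if $W(A)$ is centered at zero and $\St(A) = 0$. The plan is to leverage Theorem~\ref{th:r3}, which already characterizes exactly which such matrices are Roberts orthogonal to $I$ (nilpotent, or unitarily similar to a $1\oplus 2$ nilpotent block), and to show that the two scalar conditions ``$W(A)$ centered at zero'' and ``$\St(A)=0$'' together pin down precisely that same class.

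For the forward direction, I would argue as follows. If $A \perp_R I$, then the central symmetry of $W(A)$ (which follows from the Davis--Wielandt characterization recalled before Proposition~\ref{pr:rost}) forces the circular disk $W(A)$ to be centered at the origin, since a circular disk is centrally symmetric only about its center. Moreover, Proposition~\ref{pr:rost} gives $\St(A)=0$ directly from $A \perp_R I$. So both conditions are immediate consequences of Roberts orthogonality, with no new computation needed.

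The converse direction is where the actual content lies. Suppose $W(A)$ is a circular disk centered at $0$ and $\St(A)=0$. Because $W(A)$ is centered at the origin, $A$ is unitarily similar to the form \eqref{Acd} satisfying the constraint \eqref{c1}, exactly as in the necessity portion of Theorem~\ref{th:r3}. I would then reuse the key computation from that proof: writing things in the notation of Theorem~\ref{th:lmm}, condition \eqref{c1} gives $uvw = \rho(v^2+w^2)$, and since $\St(A)=\mu=0$ the criterion \eqref{eqlm} applies; plugging $uvw=\rho(v^2+w^2)$ into \eqref{eqlm} forces $\lambda = 0$ or $z=0$, which lands us in exactly the nilpotent or $1\oplus 2$ nilpotent case. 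At that point Theorem~\ref{th:r3} (the sufficiency half) yields $A \perp_R I$. In effect, the corollary repackages the proof of Theorem~\ref{th:r3} by observing that its necessity argument used only the two stated scalar hypotheses rather than the full strength of Roberts orthogonality.

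The main subtlety to watch is making sure the converse really only invokes the two scalar conditions and not circular reasoning through $A \perp_R I$ itself. The necessity proof of Theorem~\ref{th:r3} derived the normal form \eqref{Acd} with \eqref{c1} purely from the disk being centered at the origin, and it invoked $\St(A)=0$ (via Theorem~\ref{th:lmm}'s equation \eqref{eqlm}) to eliminate all but the nilpotent and reducible cases; I would simply isolate that these are precisely the two hypotheses of the corollary, so no independent appeal to $A\perp_R I$ is smuggled in. I expect the only mild obstacle is bookkeeping between the coordinate conventions of Theorem~\ref{th:r3} and Theorem~\ref{th:lmm}, but both are already reconciled in the existing proofs, so the corollary follows essentially by citation once the two directions are assembled.
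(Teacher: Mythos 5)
Your proposal is correct and matches the paper's intended argument exactly: the paper states Corollary~\ref{co:cst} without a separate proof precisely because, as you observe, the necessity half of Theorem~\ref{th:r3} uses Roberts orthogonality only through the two consequences ``$W(A)$ centered at zero'' (via central symmetry) and ``$\St(A)=0$'' (via Proposition~\ref{pr:rost}), so those hypotheses alone drive the reduction through \eqref{c1} and \eqref{eqlm} to the nilpotent or $[\lambda]\oplus B$ cases, after which the sufficiency half of Theorem~\ref{th:r3} closes the loop. Your bookkeeping between \eqref{Acd} and the notation of Theorem~\ref{th:lmm} (with $\mu=0$ the repeated eigenvalue) is also as in the paper, so nothing further is needed.
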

A particular example \[ A= \begin{bmatrix} 0 & 1 & 1 \\ 0 & 0 & 1\\ 0 & 0 & -\frac{1}{2} \end{bmatrix} \]
was considered in \cite{ArBeRa}. It was shown there that $A\not\perp_R I$ via computing \[ \norm{A+I}\approx 2.1617\neq 2.1366\approx \norm{A-I}, \] while $W(A)$ is a circular disk centered at the origin. This agrees with our Theorem~\ref{th:r3} since this matrix is neither unitarily reducible nor nilpotent. Note also that $\St(A)\approx 0.0203\neq 0$, in agreement with Corollary~\ref{co:cst}.

\section*{Appendix} \label{s:ap}
The figures below represent graphs of numerical ranges (bounded by green curves), spectra (blue dots) and Stampfli points (black dots) of several matrices, to illustrate some statement of the paper. The matrices in Fig.~1, 2 and 4 are nilpotent and unitarily irreducible, with the numerical range having each of the three possible shapes (circular, with a flat portion on the boundary, or ovular, as per \cite{KRS}). The one with the circular numerical range (Fig.~1) satisfies conditions of Proposition~\ref{th:3by30}, and indeed has the Stampfli point located at the origin. In Fig.~3 and~4, the Stampfli point differs from zero, and is positioned in agreement with Theorem~\ref{th:comp}. Finally, Fig.~2 illustrates that for almost normal matrices, $\St(A)$ lies in the interior of $\conv\sigma(A)$ (bounded by the red triangle), in agreement with Corollary~\ref{co:stanf}.

\begin{figure}[htbp]
\centerline{\includegraphics[scale= 0.42]{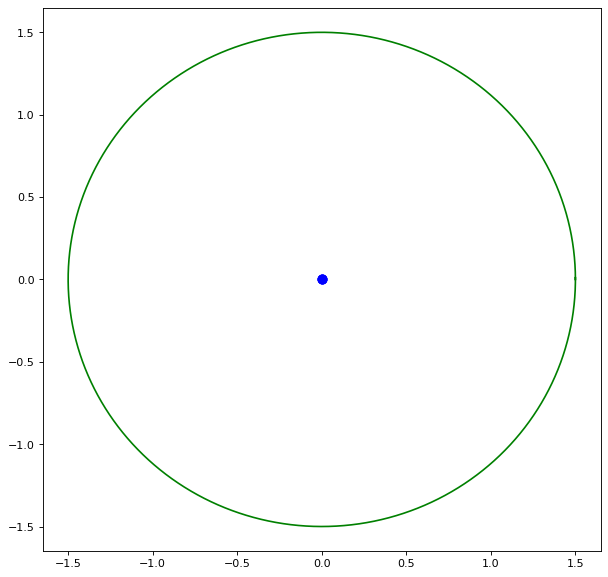}}
\caption{$A =\protect\begin{bmatrix} 0 & 2 - i & 0 \protect\\ 0 & 0 & 2i \protect\\ 0 & 0 & 0 \protect\end{bmatrix}$; $St(A) = 0$.}
\end{figure}

\begin{figure}[htbp]
\centerline{\includegraphics[scale= 0.42]{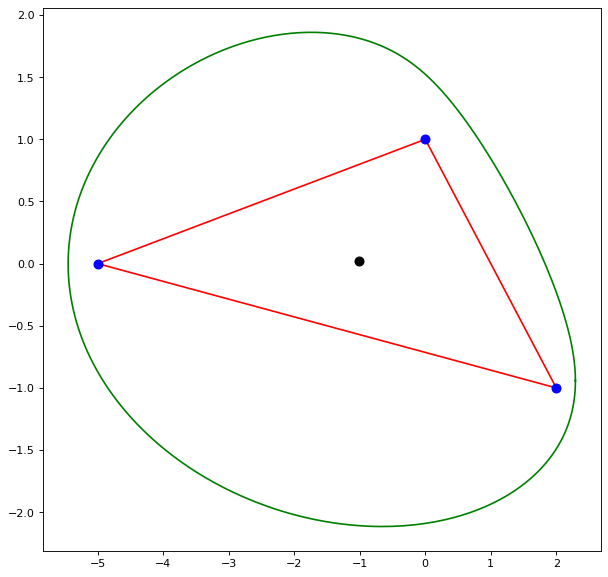}}
\caption{$A =\protect\begin{bmatrix} 2 + i & 0 & 2 - 2i \protect\\ 0 & i & 2 \protect\\ 0 & 0 & -5 \protect\end{bmatrix}$; $St(A) = -1.008 + 0.0237i$}
\end{figure}

\begin{figure}[htbp]
\centerline{\includegraphics[scale= 0.42]{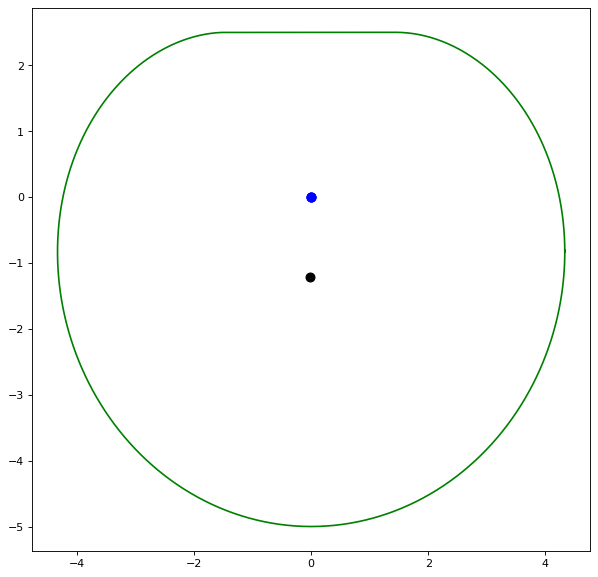}}
\caption{$A =\protect\begin{bmatrix} 0 & 3 - 4i & -5 \protect\\ 0 & 0 & -4+3i \protect\\ 0 & 0 & 0 \protect\end{bmatrix}$; $St(A) = -0.0145 -1.2143i$}
\end{figure}

\begin{figure}[htbp]
\centerline{\includegraphics[scale= 0.42]{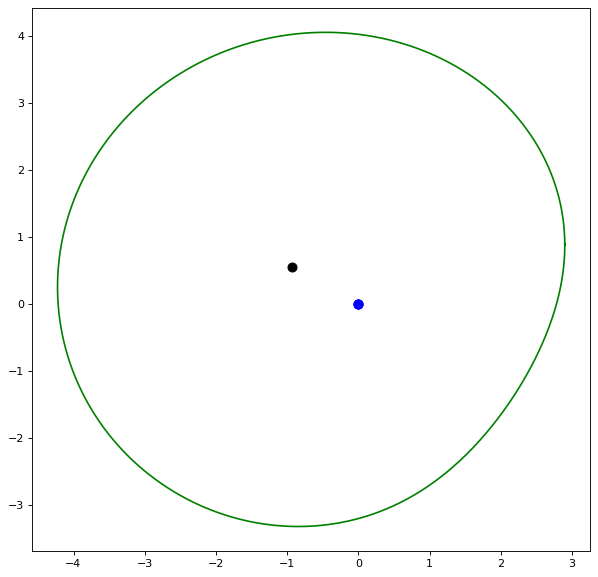}}
\caption{$A =\protect\begin{bmatrix} 0 & 1 - 4i & -3 - 2i \protect\\ 0 & 0 & 1 + 5i \protect\\ 0 & 0 & 0 \protect\end{bmatrix}$; $St(A) = -0.9363 + 0.5225i$}
\end{figure}

\clearpage
\providecommand{\bysame}{\leavevmode\hbox to3em{\hrulefill}\thinspace}
\providecommand{\MR}{\relax\ifhmode\unskip\space\fi MR }
\providecommand{\MRhref}[2]{%
	\href{http://www.ams.org/mathscinet-getitem?mr=#1}{#2}
}
\providecommand{\href}[2]{#2}

\end{document}